\documentclass[11pt,table]{amsart}
\usepackage{amsmath}
\usepackage[margin=1.0in]{geometry}
\usepackage{amscd,amsmath,amsxtra,amsthm,amssymb,stmaryrd,xr,mathrsfs,mathtools,enumerate,commath, comment, enumitem, graphicx}
\usepackage{amsfonts, amssymb, amsthm, amsmath, braket, hyperref, mathtools, comment, mathrsfs, enumitem, cleveref}
\usepackage{stmaryrd}
\usepackage{orcidlink}
\usepackage{multirow}
\usepackage{xcolor}
\usepackage{commath}
\usepackage{comment}
\usepackage{algorithm}
\usepackage{algorithmic}
\usepackage{tikz-cd}
\usepackage{tkz-graph}
\usepackage{colonequals}
\usepackage{hyperref}
\usepackage{graphicx}
\usepackage{bigstrut}
\usepackage{longtable, multirow, array}
\usepackage{enumitem}
\usepackage{placeins}

\usepackage{longtable} 
\usepackage{pdflscape} 
\usepackage{booktabs}
\usepackage{hyperref}
\definecolor{vegasgold}{rgb}{0.77, 0.7, 0.35}
\definecolor{darkgoldenrod}{rgb}{0.72, 0.53, 0.04}
\definecolor{gold(metallic)}{rgb}{0.83, 0.69, 0.22}
\hypersetup{
	colorlinks=true,
	linkcolor=darkgoldenrod,
	filecolor=brown,      
	urlcolor=gold(metallic),
	citecolor=darkgoldenrod,
}

\usepackage[all,cmtip]{xy}
\DeclareFontFamily{U}{wncy}{}
\DeclareFontShape{U}{wncy}{m}{n}{<->wncyr10}{}
\DeclareSymbolFont{mcy}{U}{wncy}{m}{n}
\DeclareMathSymbol{\Sh}{\mathord}{mcy}{"58}
\usepackage[T2A,T1]{fontenc}
\usepackage[OT2,T1]{fontenc}

\usepackage{tikz}
\usetikzlibrary{shapes.geometric}
\usetikzlibrary{decorations.markings}
\tikzset{every loop/.style={min distance=10mm,looseness=10}}
\tikzstyle{vertex}=[auto=left,circle,minimum size=1pt,inner sep=0pt]

\newtheorem{theorem}{Theorem}[section]
\newtheorem{lemma}[theorem]{Lemma}

\newtheorem*{theorem*}{Theorem}
\newtheorem*{ass*}{Assumption}
\newtheorem{definition}[theorem]{Definition}
\newtheorem{corollary}[theorem]{Corollary}
\newtheorem{remark}[theorem]{Remark}
\newtheorem{example}[theorem]{Example}

\newtheorem{proposition}[theorem]{Proposition}

\newcommand{\Q}{\mathbb{Q}}

\newcommand{\D}{\mathfrak{D}}

\numberwithin{equation}{section}

\begin{document}

\title[A Deterministic Cryptographic Prime Generation Chain]{A Deterministic Cryptographic Prime Generation Chain over Monogenic Cubic Number Fields and their Generalizations}

\author[A. Jakhar]{Anuj Jakhar\, \orcidlink{0009-0007-5951-2261}}
\author[R.~Kalwaniya]{Ravi Kalwaniya\, \orcidlink{0009-0008-6964-5276}}
\address[Jakhar, Kalwaniya]{Department of Mathematics, Indian Institute of Technology Madras, Chennai, Tamil Nadu, India-600036}
\email{anujjakhar@iitm.ac.in\,   \,ravikalwaniya3@gmail.com}

\subjclass[2020]{11A51, 11Y11, 11R21, 94A60}
\keywords{Primality testing, prime generation chain, pure prime degree fields, generalized unitary group, deterministic algorithms.}

\begin{abstract}
Generating primes is a fundamental problem in modern cryptography. Deterministic primality tests work well for special integers such as Mersenne or Proth primes, but these forms are quite restrictive. In this paper, we give a direct method to construct new primes from known ones. Starting with a seed prime $q \equiv 1 \pmod{3}$, we construct an integer $N \equiv 1 \pmod{3}$ satisfying $(2N + 1)^2 \equiv -3 \pmod{q}$. We then prove that $N$ is prime using the structure of monogenic pure cubic fields $K = \mathbb{Q}(\sqrt[3]{d})$. The resulting test requires only a single modular exponentiation and runs in $\tilde{\mathcal{O}}(\log^2 N)$ time. Finally, we show how this construction extends to pure number fields of arbitrary prime degree.
\end{abstract}
\maketitle
\section{Introduction}\label{intro}
The generation of large prime numbers plays an important role in modern cryptography. In practice, fast tests such as the Miller--Rabin test \cite{MR480295,MR566880} are widely used to quickly eliminate composite numbers, but only indicate that a number is likely to be prime. To obtain a rigorous proof of primality, deterministic methods are required, as shown by Agrawal, Kayal, and Saxena \cite{MR2123939}. This motivates the search for methods that are efficient in practice and mathematically rigorous.

The classical methods of Lucas \cite{MR1505161}, Proth \cite{Proth1878}, and Lehmer \cite{MR1502953} established unconditional tests for numbers such as $2^n-1$ and $k \cdot 2^n+1$. For a brief history of primality testing and how the tests of Lucas fit into this broader historical context, see~\cite{MR2598366}. Later, researchers worked to generalize these criteria for larger families of integers, especially those of the form $h \cdot p^n \pm 1$. In a foundational series of papers, Williams and his co-authors~\cite{MR583519,MR439719,MR3403162,MR311559,MR638738,MR866123,MR476625,MR396390,MR537980,MR314747} developed rigorous primality tests for a variety of bases. Subsequently, Bosma \cite{MR1197510,MR2076210}  and Berrizbeitia et al. \cite{MR1487359, MR2047101,MR1862113,MR2630012,MR2407072} applied explicit reciprocity laws to similar structures. 

This direction remains highly active today, and many researchers continue to develop rigorous optimized tests for specialized families such as $h \cdot 2^n \pm 1$ \cite{MR3464611}, $4Kp^n - 1$ \cite{Grau2020}, $Kp^n+1$ \cite{Grau2015}, $2kp^m-1$ \cite{MR2254741},  and $(p-1)p^n-1$ \cite{MR1697651} . Recent work has further refined these explicit criteria and improved their algorithmic efficiency~(see, \cite{MR4372238,Jakhar2026,MR3941463,MR4467005,MR4888430}). However, from a cryptographic point of view, these structured numbers have a distinct drawback. If a large prime power divides $N \pm 1$, they often show predictable bit patterns, making them vulnerable to specialized factoring attacks. In practice, cryptography prefers primes that look completely random, but still admit a deterministic proof.

To avoid the difficulty of factoring random numbers, cryptographers have long used a constructive ``bottom-up'' approach. Instead of generating a random candidate and trying to factor $N-1$, classical algorithms by Shawe-Taylor \cite{ShaweTaylor1986} and Maurer \cite{Maurer1995} start with a known seed prime $q$ and build $N$ directly around it using Pocklington's theorem. Later research extended these constructive techniques to the quadratic case $N+1$ using Lucas sequences. Although highly efficient, these linear and quadratic chains still leave the resulting primes with rigid structures $N \pm 1$.

Researchers have also studied higher-degree extensions. Recently, Baillie, Fiori, and Wagstaff \cite{MR4273120} refined the Baillie-PSW test by adding Lucas--V pseudoprime conditions. In another direction, Ramzy \cite{Ramzy2023} extended Proth's theorem and introduced $a$-Safe primes, giving new ways to generate primes. At the same time, cubic structures are being explored more actively. In 2021, Roettger and Williams \cite{MR4316955} successfully moved classical Lucas functions into the cubic domain, proving that third-order linear recurrence sequences $(C_n, W_n)$ could verify candidate integers where $p^n \mid (N^2+N+1)$. Furthermore, Di Domenico and Murru \cite{MR4893469} developed a highly performant primality test that cleverly utilizes the projectivization of Pell's cubic alongside third-order linear recurrence sequences. For a comprehensive, modern treatment of Lucas sequences, see the recent text by Ballot and Williams \cite{MR4676377}.

Even with these improvements, a difficult computational barrier remains. In these cubic tests, the required factor $q$ actually has to be larger than $N$ itself. Therefore, verifying an arbitrary number $N$ requires first choosing a candidate and subsequently performing a computationally expensive  search for massive prime factors just to prove it is prime. Even Elliptic Curve Primality Proving (ECPP) \cite{MR1199989}, which circumvents these strict algebraic requirements, still relies on a step-by-step search that takes $\tilde{\mathcal{O}}(\log^3 N)$ time. For a comprehensive overview of deterministic primality proving and recent developments in this direction, we refer the reader to the foundational texts by Crandall and Pomerance \cite{MR2156291}, Wagstaff \cite{MR3135977}, and the recent monograph by Roettger and Williams \cite{MR4944565}.

In this paper, we take a different approach to avoid this difficulty. Working initially in pure cubic fields, we offer an alternative that naturally resists standard $N \pm 1$ factorization attacks while keeping the generation process highly efficient. Instead of picking a random integer $N$ and trying to prove that it is prime, we start with a known seed prime $q \equiv 1 \pmod{3}$ and construct $N$ directly. More precisely, we work in cubic fields of the form $K = \mathbb{Q}(\sqrt[3]{d})$ and define $N$ using the congruence
\begin{equation}
(2N+1)^2 \equiv -3 \pmod q.
\end{equation}
This ensures that the expression $N^2+N+1$ inherently takes the form 
\[
N^2+N+1 = k \cdot q,
\]
where $k$ is some non zero integer. At the same time, the way $N$ is built ensures that $q$ is exceptionally large. In fact, since $q \approx N^2$, the structural condition $q > N^{3/2}-1$ is automatically satisfied. This completely removes the need to search for a large prime factor afterwards. We then work in the cubic quotient ring $\mathcal{R}_N = \mathcal{O}_K/N\mathcal{O}_K$ (where $\mathcal{O}_K$ is the ring of integers of $K$) and consider the generalized unitary group $\mathcal{G}_N(d)$. The precise definitions and properties of these algebraic structures are developed in Section~\ref{sec:2}. This setting allows us to isolate the cyclotomic part and produce a Lucas--type certificate along with the number $N$. All computations are carried out using polynomial arithmetic modulo $x^3-d$, keeping the procedure very fast. In particular, the verification step reduces to just a few modular exponentiations, leading to a total bit complexity of $\tilde{\mathcal{O}}(\log^2 N)$. Further, we extend results for monogenic pure prime degree fields.\\[1mm]
\textbf{Our main contributions are summarized as follows : }First, we introduce a deterministic, bottom-up prime generation chain that constructs a candidate prime $N$ from a known seed prime $q \equiv 1 \pmod 3$ via the relation $(2N+1)^2 \equiv -3 \pmod q$. By working within the quotient ring of monogenic pure cubic fields $K = \mathbb{Q}(\sqrt[3]{d})$, we establish an explicit primality certificate using the generalized unitary group $\mathcal{G}_N(d)$ and the cyclotomic polynomial $\Phi_q(X)$. This constructive approach inherently satisfies the necessary structural bounds without heavy factorization steps, allowing the primality of $N$ to be rigorously verified in $\tilde{\mathcal{O}}(\log^2 N)$ time. Finally, we fully generalize this algebraic framework and its corresponding primality certificate to pure number fields of arbitrary prime degree $p$.\\[1mm]
\textbf{Structure of the Paper :} The remainder of this paper is organized as follows. In Section \ref{sec:2}, we establish the algebraic foundations, detailing the Kummer-Dedekind factorization and the structural properties of the generalized unitary group $\mathcal{G}_N(d)$. Section \ref{sec:3} introduces the core cryptographic prime generation chain, presenting the construction of the candidate prime $N$ and explicitly proving the bounding condition $q > N^{3/2}-1$. In Section \ref{sec:4}, we develop the general theoretical framework for the cubic Lucasian primality certificate. In Section \ref{sec:5}, we apply this theory to derive an efficient deterministic primality certificate for our target integers. Section \ref{sec:6} formalizes the complete generation algorithm and establishes its strict computational bit complexity $\tilde{\mathcal{O}}(\log^2 N)$. To optimize practical implementations, Section \ref{sec:7} provides a fast probabilistic filter using Cubic Strong Probable Primes. In Section \ref{sec:8}, we present computational experiments that directly support our theoretical complexity bounds. Building upon this foundation, Section \ref{sec:9} expands our methodology, fully generalizing the algebraic framework and structural primality certificate to pure number fields of arbitrary prime degree $p$. Finally, Section \ref{sec:10} provides the concluding remarks.
\section{Algebraic Foundations }\label{sec:2}

The efficiency of an algebraic primality test depends on the arithmetic of the ring of integers $\mathcal{O}_K$. In our approach, the main step is modular exponentiation in the quotient ring $\mathcal{R}_N = \mathcal{O}_K / N\mathcal{O}_K$. When the integral basis\footnote{An integral basis of a number field $K$ of degree $n$ is a set of $n$ algebraic integers $\{b_1, b_2, \dots, b_n\}$ in $\mathcal{O}_K$ such that every element in $\mathcal{O}_K$ can be expressed uniquely as a linear combination $\sum_{i=1}^n a_i b_i$ with coefficients $a_i \in \mathbb{Z}$.} of $\mathcal{O}_K$ includes fractional elements, the computations in $\mathcal{R}_N$ become more involved. Then one must keep track of the denominators and compute modular inverses. This additional complication increases the computational cost and does not allow us to achieve the desired $\tilde{\mathcal{O}}(\log^2 N)$ complexity. To keep arithmetic manageable, we therefore restrict our attention to number fields whose rings of integers admit a simple polynomial representation, which naturally leads to monogenic fields. This is defined as follows:
\begin{definition}
A number field $K$ of degree $n$ is said to be \textit{monogenic} if its ring of integers $\mathcal{O}_K$ is generated by a single element $\alpha \in \mathcal{O}_K$. That is, $\mathcal{O}_K = \mathbb{Z}[\alpha]$, and the set $\{1, \alpha, \alpha^2, \dots, \alpha^{n-1}\}$ forms an integral basis for $\mathcal{O}_K$, known as a \textit{power basis}.
\end{definition}
From an algorithmic perspective, working within a monogenic field is ideal. When $\mathcal{O}_K = \mathbb{Z}[\alpha]$, any element in the quotient ring $\mathcal{R}_N$ can be uniquely represented as a polynomial in $\alpha$ of degree less than $n$, with coefficients in $\mathbb{Z}/N\mathbb{Z}$. Multiplication is simply polynomial multiplication reduced modulo the minimal polynomial of $\alpha$ and modulo $N$, completely avoiding fractional arithmetic.
We now apply this framework to pure cubic fields. Let $K = \mathbb{Q}(\theta),$ where $\theta^3 = d$. Without loss of generality, we may assume $d = ab^2$, where $a$ and $b$ are relatively prime squarefree integers. We denote the ring of integers of $K$ by $\mathcal{O}_K$, and the absolute discriminant of the field by $d_K$. The minimal polynomial of $\theta$ is $f(x) = x^3 - d$, which has discriminant $\D_f = -27d^2$. It is a fundamental result in algebraic number theory that the polynomial discriminant and the field discriminant $d_K$, are related by the index equation:
\begin{equation}\label{formula}
\D_f = [\mathcal{O}_K : \mathbb{Z}[\theta]]^2 d_K.
\end{equation}
To understand when $K$ is monogenic with respect to $\theta$, we must examine the general integral basis of $\mathcal{O}_K$. A classical theorem by Dedekind classifies this basis depending on the prime factors of $d$ and its congruence class modulo 9.
\begin{theorem}
\label{thm:integral_basis}
Let $K = \mathbb{Q}(\theta)$ be a cubic field with $\theta^3 = d = ab^2$ as defined above. Then the following holds:
\begin{enumerate}
    \item[\textup{(i)}] If $d \not\equiv 1, 8 \pmod 9$, then $\{1, \theta, \theta^2/b\}$ is an integral basis of $\mathcal{O}_K$ and $d_K = -27a^2b^2$.
    \item[\textup{(ii)}] If $d \equiv 1 \pmod 9$, then $\{\theta, \theta^2/b, (1 + \theta + \theta^2)/3\}$ is an integral basis of $\mathcal{O}_K$ and $d_K = -3a^2b^2$.
    \item[\textup{(iii)}] If $d \equiv 8 \pmod 9$, then $\{\theta, \theta^2/b, (1 - \theta + \theta^2)/3\}$ is an integral basis of $\mathcal{O}_K$ and $d_K = -3a^2b^2$.
\end{enumerate}
\end{theorem}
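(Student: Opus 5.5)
The strategy is to compute $\mathcal{O}_K$ one prime at a time, using the index formula \eqref{formula}. Since $\D_f=-27d^2$, only primes dividing $3d$ can divide $[\mathcal{O}_K:\mathbb{Z}[\theta]]$. The first step is a symmetry reduction: the element $\theta' = \theta^2/b$ satisfies $\theta'^3 = a^2b^4/b^3 = a^2b$, so $\mathbb{Q}(\theta)=\mathbb{Q}(\theta')$, and the roles of $a$ and $b$ are exchanged. Hence $\mathbb{Z}[\theta,\theta^2/b]$ is an order containing both $\mathbb{Z}[\theta]$ and $\mathbb{Z}[\theta']$.

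Next I would show that this order is $p$-maximal for every prime $p \mid ab$, $p\neq 3$ (and for $p=3$ when $3\mid d$).
\begin{itemize}
\item If $p\mid a$, then $x^3-d \equiv x^3 \pmod p$ and $v_p(d)=1$, so $x^3-d$ is $p$-Eisenstein. Dedekind's criterion then gives $p\nmid[\mathcal{O}_K:\mathbb{Z}[\theta]]$, and $p$ is totally ramified.
\item If $p\mid b$, then $v_p(a^2b)=1$, so $x^3-a^2b$ is $p$-Eisenstein. The same argument gives $p$-maximality of $\mathbb{Z}[\theta']$.
\end{itemize}
Either way $p^2\mid d_K$, since a totally tamely ramified prime contributes $p^{2}$. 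The $p=3\mid d$ case is wild, but the Eisenstein argument still yields $3$-maximality and $v_3(d_K)=3$ from the different. The index of $\mathbb{Z}[\theta]$ in $\mathbb{Z}[\theta,\theta^2/b]$ is $b$, because the basis change matrix is $\mathrm{diag}(1,1,1/b)$. So this order has discriminant $-27d^2/b^2=-27a^2b^2$, up to what happens at the prime $3$.

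The remaining and main step is the prime $3$ when $3\nmid d$. Here $x^3-d\equiv (x-d)^3 \pmod 3$. I would apply Dedekind's criterion with $g=x-d$: it says $3$ divides the index iff $3 \mid F(d)$, where $F=((x-d)^3-(x^3-d))/3$. Now $F(d)=(d-d^3)/3$, so the condition is $d^3\equiv d \pmod 9$, i.e.\ $d\equiv \pm1 \pmod 9$.
\begin{itemize}
\item If $d\not\equiv\pm1\pmod 9$, the order is $3$-maximal, which gives (i) with $d_K=-27a^2b^2$.
\item If $d\equiv\pm1\pmod 9$, the index at $3$ is divisible by $3$. Since $v_3(\D_f)=3$ and discriminants of cubic fields satisfy $v_3(d_K)\geq 1$ when $3$ ramifies (and $3$ does ramify, as $(x-d)^3$ is a cube), the index at $3$ is exactly $3$. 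Hence $d_K=-3a^2b^2$.
\end{itemize}

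To exhibit the basis, I would verify that $\gamma=(1\pm\theta+\theta^2)/3$ is integral when $d\equiv\pm1\pmod 9$. Writing $\gamma=\big(1+\epsilon\theta+\theta^2\big)/3$ with $\epsilon = \pm1 \equiv d \pmod 3$, I would compute its characteristic polynomial via the trace and norm, or more cleanly note that $\gamma \cdot (\epsilon\theta - 1) = (d\epsilon^3... )$. The simplest route is to check directly that the trace, the second symmetric function, and the norm of $\gamma$ are integers using $d\equiv\epsilon\pmod 9$; this is the computational obstacle. Finally, the lattice spanned by $\theta,\theta^2/b,\gamma$ contains $1 = 3\gamma-\epsilon\theta-b(\theta^2/b)$ and has index $3$ over $\mathbb{Z}[\theta,\theta^2/b]$, which matches the discriminant $-3a^2b^2$. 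Therefore it is $\mathcal{O}_K$, proving (ii) and (iii).
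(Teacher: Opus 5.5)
The paper does not prove this theorem. It quotes it as Dedekind's classical description of the integral basis of a pure cubic field, so the only comparison is with the standard argument, and that is essentially what you propose. Your outline is: $p$-maximality of $\mathbb{Z}[\theta,\theta^2/b]$ at $p\mid ab$ via Eisenstein, Dedekind's criterion at $3$ when $3\nmid d$, and in the case $d\equiv\pm1\pmod 9$ an index-$3$ overlattice identified with $\mathcal{O}_K$ by matching discriminants. This skeleton is sound and proves the statement as written, but three local points need repair.

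First, when $3\mid d$ your claim $v_3(d_K)=3$ is false. For the $3$-Eisenstein polynomial $x^3-d$ (or $x^3-a^2b$ when $3\mid b$), the root is a uniformizer and the different is generated by $3\theta^2$. Its exponent is therefore $3+2=5$, so $v_3(d_K)=5$; for example $d_K=-243$ for $d=3$. This agrees with $v_3(-27a^2b^2)=5$. You never use this number: $3$-maximality of $\mathbb{Z}[\theta]$ or $\mathbb{Z}[\theta']$ already gives $d_K=\mathrm{disc}\,\mathbb{Z}[\theta,\theta^2/b]=-27a^2b^2$, so just drop the remark.

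Second, for $d\equiv\pm1\pmod 9$ you justify $v_3(d_K)\ge 1$ by saying $3$ ramifies because $x^3-d\equiv(x-d)^3\pmod 3$. But this is exactly the case where $3$ divides the index, so Kummer--Dedekind says nothing about how $3$ splits. Compare $x^2-5$ at $2$: it is a square mod $2$, yet $2$ is inert in $\mathbb{Q}(\sqrt5)$. Fortunately the bound you need comes for free. Since $v_3(\mathfrak{D}_f)=3$, $\mathfrak{D}_f=[\mathcal{O}_K:\mathbb{Z}[\theta]]^2 d_K$, and $d_K\in\mathbb{Z}$, you get $2v_3([\mathcal{O}_K:\mathbb{Z}[\theta]])\le 3$. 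Dedekind's criterion makes this valuation positive, so it equals $1$ and $v_3(d_K)=1$.

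Third, you leave the integrality of $\gamma=(1+\epsilon\theta+\theta^2)/3$ undone, and your displayed identity is garbled. Put $\eta=\epsilon\theta$, so that $\eta^3=D:=\epsilon d\equiv 1\pmod 9$ and $3\gamma(\eta-1)=\eta^3-1=D-1$. From $\mathrm{Tr}(\gamma)=1$, $\mathrm{Tr}(\gamma^2)=(1+2D)/3$ and $N(\gamma)=(D-1)^2/27$, the minimal polynomial of $\gamma$ is
\[
x^3-x^2-\frac{D-1}{3}\,x-\frac{(D-1)^2}{27},
\]
which lies in $\mathbb{Z}[x]$ exactly when $9\mid D-1$.

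You should also record the one-line check that $\mathbb{Z}\langle 1,\theta,\theta^2/b\rangle$ is closed under multiplication: $\theta\cdot\theta^2/b=ab$ and $(\theta^2/b)^2=a\theta$. Your index-$b$ computation and the discriminant $-27a^2b^2$ of this order depend on it. With these fixes your argument is a complete proof of the result the paper only cites.
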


\noindent From Theorem \ref{thm:integral_basis}, it is clear that the general integral basis contains denominators of $b$, and depending on the congruence of $d$ denominators of $3$. Since our goal is to compute using the simple power basis $\{1, \theta, \theta^2\}$, we must restrict our choice of $d$ so that these fractional terms disappear entirely. The following corollary establishes the exact conditions required to guarantee this.

\begin{corollary}\label{cor:monogenic}
Let $K = \mathbb{Q}(\theta)$ with $\theta^3 = d$. The ring of integers is  $\mathcal{O}_K = \mathbb{Z}[\theta]$ if and only if $d$ is a squarefree integer and $d \not\equiv 1, 8 \pmod 9$.
\end{corollary}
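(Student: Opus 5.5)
Since $\theta^3 = d$ with $d = ab^2$, $\mathcal{O}_K = \mathbb{Z}[\theta]$ is equivalent to the index $[\mathcal{O}_K:\mathbb{Z}[\theta]]$ being $1$. By the index formula \eqref{formula}, this means $\D_f = d_K$. We have $\D_f = -27d^2 = -27a^2b^4$, and $d_K$ is read off from Theorem~\ref{thm:integral_basis} in each of its three cases. So the plan is to compare these two quantities case by case. One small preliminary: $\theta$ generates a cubic field only if $d$ is not a cube, and we may replace $d$ by its cube-free part. Hence $d = ab^2$ with $a,b$ coprime and squarefree, and ``$d$ squarefree'' means exactly $|b| = 1$.

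\emph{Sufficiency.} Suppose $d$ is squarefree and $d \not\equiv 1,8 \pmod 9$. Then $b = \pm 1$ and case (i) of Theorem~\ref{thm:integral_basis} applies, giving $d_K = -27a^2 = -27d^2 = \D_f$. So the index is $1$. Equivalently, the basis $\{1,\theta,\theta^2/b\}$ is just $\{1,\theta,\pm\theta^2\}$, which spans $\mathbb{Z}[\theta]$.

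\emph{Necessity.} Suppose the index is $1$, so $d_K = -27a^2b^4$.
\begin{itemize}
\item In case (i), $d_K = -27a^2b^2$, so $b^4 = b^2$, forcing $b^2 = 1$. Thus $d$ is squarefree, and by the case hypothesis $d \not\equiv 1,8 \pmod 9$.
\item In cases (ii) and (iii), $d_K = -3a^2b^2$, and equality would require $9b^2 = 1$, which is impossible in integers.
\end{itemize}
So these cases never give index $1$. Alternatively, one can see this directly: the element $(1\pm\theta+\theta^2)/3 \in \mathcal{O}_K \setminus \mathbb{Z}[\theta]$.

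The only point requiring care is the reduction to $d = ab^2$ with $a,b$ coprime squarefree. This reduction makes ``squarefree'' equivalent to $b^2 = 1$. One also has to handle sign conventions such as $b = -1$, which do not affect squares. The remaining steps are direct comparisons of discriminants, so I expect no real obstacle.
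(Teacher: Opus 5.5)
Your proof is correct and follows essentially the paper's route: the paper reads the corollary directly off Theorem~\ref{thm:integral_basis}, observing that the fractional basis elements $\theta^2/b$ and $(1\pm\theta+\theta^2)/3$ disappear exactly when $b=\pm1$ and $d\not\equiv \pm1 \pmod 9$, and your comparison of $-27a^2b^4$ with $d_K$ via \eqref{formula} is a tidier bookkeeping of the same reading. One small caveat: replacing $d$ by its cube-free part changes $\theta$ and hence $\mathbb{Z}[\theta]$, so it is not literally a reduction. If $d=c^3d'$ with $|c|>1$, however, the element $\theta/c$ lies in $\mathcal{O}_K\setminus\mathbb{Z}[\theta]$, which settles that case in one line, and the paper in any case assumes $d=ab^2$ from the outset.
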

\noindent By restricting the field parameter $d$ to squarefree integers with $d \not\equiv \pm 1 \pmod 9$, we ensure that field $K$ is monogenic with a power basis generator $\theta$. In this case, the discriminant of field is given by $d_K = -27d^2$. This restriction allows the primality test to run efficiently, since all computations in $\mathcal{R}_N$ can be performed using modular polynomial arithmetic over $\mathbb{Z}/N\mathbb{Z}$.\\
Instead of working with standard integers modulo $N$, our primality test operates inside the quotient ring $\mathcal{R}_N = \mathcal{O}_K / N\mathcal{O}_K$. To understand the mathematical structure of this ring, we determine how the ideal generated by $N$ factors inside the ring of integers $\mathcal{O}_K$. To do this, we rely on a foundational result from algebraic number theory known as the Kummer-Dedekind Theorem (see, \cite[Theorem 4.8.13]{Cohen1993}),  which is as follows:

\begin{theorem}\label{thm:kummer_dedekind}
Let $K = \mathbb{Q}(\theta)$ be a number field where $\theta$ is an algebraic integer with minimal polynomial $f(x) \in \mathbb{Z}[x]$. Let $p$ be a prime number. If $p$ does not divide the index $[\mathcal{O}_K : \mathbb{Z}[\theta]]$, then the prime ideal factorization of $p\mathcal{O}_K$ is determined by the factorization of $f(x)$ modulo $p$. Specifically, if $f(x)$ is irreducible modulo $p$, then $p\mathcal{O}_K$ remains a prime ideal in $\mathcal{O}_K$.
\end{theorem}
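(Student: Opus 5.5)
The plan is to reduce the statement to a ring isomorphism
\[
\mathcal{O}_K/p\mathcal{O}_K \;\cong\; \mathbb{Z}[\theta]/p\mathbb{Z}[\theta] \;\cong\; \mathbb{F}_p[x]/(\bar f(x)),
\]
where $\bar f$ is the reduction of $f$ modulo $p$ and $n=\deg f$. Once this is established, the ideals of $\mathcal{O}_K$ containing $p$ correspond to the ideals of $\mathbb{F}_p[x]/(\bar f)$, and the factorization of $p\mathcal{O}_K$ can be read off from the factorization of $\bar f$. The second isomorphism is formal. Since $\mathbb{Z}[\theta]\cong\mathbb{Z}[x]/(f)$ (because $f$ is the monic minimal polynomial of $\theta$), we get $\mathbb{Z}[\theta]/p\mathbb{Z}[\theta]\cong \mathbb{Z}[x]/(p,f)\cong \mathbb{F}_p[x]/(\bar f)$.

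The key step is the first isomorphism, and it is the only place where the hypothesis $p\nmid m:=[\mathcal{O}_K:\mathbb{Z}[\theta]]$ enters.
\begin{itemize}
\item The inclusion $\mathbb{Z}[\theta]\subseteq\mathcal{O}_K$ induces a ring map $\iota:\mathbb{Z}[\theta]/p\mathbb{Z}[\theta]\to\mathcal{O}_K/p\mathcal{O}_K$.
\item Since $m$ is the index of an additive subgroup, $m\mathcal{O}_K\subseteq\mathbb{Z}[\theta]$.
\item Because $\gcd(m,p)=1$, choose $m'\in\mathbb{Z}$ with $mm'\equiv1\pmod p$.
\item For $x\in\mathcal{O}_K$, write $x=mm'x+(1-mm')x$. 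The first term lies in $\mathbb{Z}[\theta]$ and the second lies in $p\mathcal{O}_K$, so $\iota$ is surjective.
\item Both $\mathbb{Z}[\theta]$ and $\mathcal{O}_K$ are free $\mathbb{Z}$-modules of rank $n$, so both quotients have exactly $p^n$ elements. Hence $\iota$ is bijective.
\end{itemize}

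Now factor $\bar f=\prod_{i=1}^r \bar g_i^{\,e_i}$ in $\mathbb{F}_p[x]$, with distinct monic irreducibles $\bar g_i$ and lifts $g_i\in\mathbb{Z}[x]$, and put $\mathfrak{p}_i=(p,g_i(\theta))$.
\begin{itemize}
\item Under the isomorphism, $\mathcal{O}_K/\mathfrak{p}_i\cong\mathbb{F}_p[x]/(\bar g_i)$, which is a field of degree $\deg g_i$ over $\mathbb{F}_p$. So each $\mathfrak{p}_i$ is a prime (maximal) ideal of residue degree $\deg g_i$.
\item The $\mathfrak{p}_i$ are pairwise distinct, since the $\bar g_i$ are pairwise coprime.
\item Every prime $\mathfrak{P}$ of $\mathcal{O}_K$ containing $p$ is one of them: the image of $\mathfrak{P}$ in $\mathbb{F}_p[x]/(\bar f)$ is a maximal ideal $(\bar g_i)$ for some $i$.
\item We have $\prod\mathfrak{p}_i^{e_i}\subseteq p\mathcal{O}_K+(\prod g_i(\theta)^{e_i})$. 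Since $\prod g_i^{e_i}\equiv f\pmod p$ and $f(\theta)=0$, this ideal is contained in $p\mathcal{O}_K$.
\item Comparing norms gives $N(\prod\mathfrak{p}_i^{e_i})=p^{\sum e_i\deg g_i}=p^n=N(p\mathcal{O}_K)$. Therefore $p\mathcal{O}_K=\prod\mathfrak{p}_i^{e_i}$.
\end{itemize}

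For the special case singled out in the statement, the argument is immediate. If $\bar f$ is irreducible, then $\mathcal{O}_K/p\mathcal{O}_K\cong\mathbb{F}_p[x]/(\bar f)\cong\mathbb{F}_{p^n}$ is a field. Hence $p\mathcal{O}_K$ is maximal, and in particular prime, with residue degree $n$. This is the form used later with $n=3$ and $f=x^3-d$. By Corollary~\ref{cor:monogenic} the index is $1$ there, so the hypothesis holds for every prime.

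I expect the main obstacle to be the surjectivity/injectivity argument for $\iota$. The equality $p\mathcal{O}_K=\prod\mathfrak{p}_i^{e_i}$ also needs the norm identity $N(\mathfrak{a})=\#(\mathcal{O}_K/\mathfrak{a})$ together with its multiplicativity. I would quote this from \cite{Cohen1993} rather than reprove it.
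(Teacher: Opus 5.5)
Your proof is correct and complete. The paper does not prove this statement itself; it quotes it from \cite[Theorem 4.8.13]{Cohen1993}. What you have written is the standard proof of that theorem. The hypothesis $p\nmid[\mathcal{O}_K:\mathbb{Z}[\theta]]$ enters exactly once, to make $\mathbb{Z}[\theta]/p\mathbb{Z}[\theta]\to\mathcal{O}_K/p\mathcal{O}_K$ surjective, and counting $p^n$ elements on each side then gives bijectivity. After that, the general factorization follows from $\prod\mathfrak{p}_i^{e_i}\subseteq p\mathcal{O}_K$ together with the norm count $\sum e_i\deg g_i=n$.

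Your write-up also makes explicit something the bare citation hides. The only case the paper actually uses, irreducible $\bar f$, needs nothing beyond the isomorphism $\mathcal{O}_K/p\mathcal{O}_K\cong\mathbb{F}_p[x]/(\bar f)$. The norm multiplicativity you defer to Cohen is needed only for the general factorization clause.

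One small correction to your closing aside. In Lemma~\ref{lem:ring_structure} the paper does not assume $d$ squarefree with $d\not\equiv\pm1\pmod 9$, so Corollary~\ref{cor:monogenic} is not what supplies the index hypothesis there. Instead, $\gcd(N,3d)=1$ gives $N\nmid -27d^2$, and the index formula \eqref{formula} then forces $N\nmid[\mathcal{O}_K:\mathbb{Z}[\theta]]$ for any non-cube $d$.
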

\noindent When $N\mathcal{O}_K$ is a prime ideal, the quotient ring $\mathcal{R}_N = \mathcal{O}_K / N\mathcal{O}_K$ becomes a finite field. In this field extension, the Galois group is generated by the Frobenius automorphism, which maps any element $x \mapsto x^N$. This allows us to formally define the relative norm map and the generalized unitary group.

\begin{definition}
Suppose that $\mathcal{R}_N \cong \mathbb{F}_{N^3}$. We define the norm map $\mathcal{N} : \mathcal{R}_N^\times \to \mathbb{F}_N^\times$ by $\mathcal{N}(w) = w^{1+N+N^2}.$ We define the generalized unitary group as follows:
\[
\mathcal{G}_N(d) := \{ w \in \mathcal{R}_N^\times \mid w^{N^2+N+1} \equiv 1 \pmod N \}.
\]
\end{definition}
\begin{proposition}[Euler's Criterion for Cubic Residues]
\label{thm:euler_cubic}
Let $p$ be a prime such that $p \equiv 1 \pmod 3$, and  $a$ be an integer such that $\gcd(a, p) = 1$. Then $a$ is a cubic residue modulo $p$ if and  only if $a^{\frac{p-1}{3}} \equiv 1 \pmod p.$
\end{proposition}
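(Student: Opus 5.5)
The plan is to use the cyclicity of $(\mathbb{Z}/p\mathbb{Z})^\times$. This group has order $p-1$, and $3 \mid p-1$ because $p \equiv 1 \pmod 3$. Fix a generator $g$ (a primitive root modulo $p$) and write $a \equiv g^k \pmod p$ for some $0 \le k < p-1$, which is possible since $\gcd(a,p)=1$.

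For the forward direction, suppose $a$ is a cubic residue, so $a \equiv x^3 \pmod p$ for some $x$ with $p \nmid x$. Then
\[
a^{\frac{p-1}{3}} \equiv x^{p-1} \equiv 1 \pmod p
\]
by Fermat's little theorem.

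For the converse, suppose $a^{\frac{p-1}{3}} \equiv 1 \pmod p$. Then $g^{k(p-1)/3} \equiv 1$. Since $g$ has order exactly $p-1$, this gives $(p-1) \mid k(p-1)/3$, hence $3 \mid k$. Writing $k = 3m$, we get $a \equiv (g^m)^3 \pmod p$, so $a$ is a cubic residue.

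As an alternative for the converse, one can count. The cubing map on $(\mathbb{Z}/p\mathbb{Z})^\times$ has kernel of size $\gcd(3,p-1)=3$, so its image has size $(p-1)/3$. Every element of the image satisfies $X^{(p-1)/3}=1$, and this polynomial has at most $(p-1)/3$ roots in the field $\mathbb{F}_p$. Hence the image coincides with the full set of roots.

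The only nontrivial input is the existence of a primitive root modulo $p$ (equivalently, the bound on the number of roots of a polynomial over a field); everything else is an elementary exponent computation. No earlier results from the paper are needed.
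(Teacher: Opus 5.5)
Your proof is correct and is essentially the paper's argument. The paper omits the cubic case and defers to the general $p$-th power residue criterion proved in Section~\ref{sec:9}, which likewise uses Fermat's little theorem for the forward direction and writes $a \equiv g^k$ for a primitive root $g$ to force $3 \mid k$ in the converse; your counting alternative is also valid but goes beyond what the paper does.
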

\noindent We omit the  proof for the cubic case here, as the fully generalized version of this criterion for an arbitrary prime degree is  established later in Section \ref{sec:9}. As an immediate consequence of this criterion, if $a^{\frac{p-1}{3}} \not\equiv 1 \pmod p$, then $a$ is a cubic non-residue, which means that the polynomial $X^3 - a$ is irreducible over the finite field $\mathbb{F}_p$.\\
\noindent We now show that, by imposing suitable conditions on $N$ and $d$, the quotient ring becomes a finite field, allowing us to determine the exact size of the generalized unitary group.

\begin{lemma}\label{lem:ring_structure}
Let $N$ be a prime such that $N \equiv 1 \pmod{3}$ and $\gcd(N,\,3d)=1$, where $d \in \mathbb{Z}$ is not a perfect cube. Suppose that $d^{\frac{N-1}{3}} \not\equiv 1 \pmod{N}.$
Then the following statements hold:
\begin{enumerate}
    \item[\textup{(i)}] The polynomial $X^3 - d$ is irreducible over $\mathbb{F}_N$.
    
    \item[\textup{(ii)}] The ideal $N\mathcal{O}_K$ is prime. In particular, $\mathcal{R}_N := \mathcal{O}_K / N\mathcal{O}_K \cong \mathbb{F}_{N^3}.$
    \item[\textup{(iii)}] The group $\mathcal{G}_N(d) := \{ w \in \mathcal{R}_N^\times \mid w^{N^2+N+1} = 1 \}$
    is cyclic of order $N^2+N+1$.
\end{enumerate}
\end{lemma}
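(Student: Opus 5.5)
The plan is to prove the three statements in order, each feeding into the next. Part (i) will follow from the Euler criterion, part (ii) from Theorem~\ref{thm:kummer_dedekind} together with the index formula \eqref{formula}, and part (iii) from the cyclicity of the multiplicative group of a finite field.

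For (i), since $N\equiv 1 \pmod 3$ and $\gcd(N,d)=1$, Proposition~\ref{thm:euler_cubic} applies. The hypothesis $d^{(N-1)/3}\not\equiv 1 \pmod N$ then says that $d$ is not a cube in $\mathbb{F}_N$, so $X^3-d$ has no root in $\mathbb{F}_N$. A polynomial of degree $3$ over a field that has no root has no linear factor. Any nontrivial factorization of a cubic must contain a linear factor, so $X^3-d$ is irreducible over $\mathbb{F}_N$. We also note that, because $d$ is not a perfect cube, $X^3-d$ is irreducible over $\mathbb{Q}$. Hence it is the minimal polynomial of $\theta$ and $K$ is a genuine cubic field.

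For (ii), the key step is to verify that $N \nmid [\mathcal{O}_K:\mathbb{Z}[\theta]]$.
\begin{itemize}
\item By \eqref{formula}, $[\mathcal{O}_K:\mathbb{Z}[\theta]]^2$ divides $\D_f=-27d^2$.
\item Every prime dividing the index therefore divides $3d$.
\item Since $\gcd(N,3d)=1$ and $N$ is prime, $N$ does not divide the index.
\end{itemize}
Theorem~\ref{thm:kummer_dedekind} combined with (i) now shows that $N\mathcal{O}_K$ is a prime ideal. For the explicit description of the quotient, I will use the standard fact, behind Kummer--Dedekind, that when $N$ is coprime to the index the inclusion $\mathbb{Z}[\theta]\hookrightarrow\mathcal{O}_K$ induces an isomorphism
\[
\mathbb{Z}[\theta]/N\mathbb{Z}[\theta]\cong\mathcal{O}_K/N\mathcal{O}_K .
\]
This gives
\[
\mathcal{R}_N\cong \mathbb{Z}[X]/(N,X^3-d)\cong \mathbb{F}_N[X]/(X^3-d).
\]
By (i) the right-hand side is a field of degree $3$ over $\mathbb{F}_N$, that is, $\mathbb{F}_{N^3}$. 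In the monogenic setting of Corollary~\ref{cor:monogenic} this isomorphism step is trivial. In general it is the only point requiring care, and I expect it to be the main (mild) obstacle.

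For (iii), $\mathcal{R}_N^\times\cong\mathbb{F}_{N^3}^\times$ is cyclic of order $N^3-1=(N-1)(N^2+N+1)$. Set $m=N^2+N+1$, which divides $N^3-1$. In a cyclic group of order $n$ with $m\mid n$, the solutions of $w^m=1$ form the unique subgroup of order $m$; this subgroup is generated by $g^{n/m}$ for a generator $g$. Hence $\mathcal{G}_N(d)$ is cyclic of order $N^2+N+1$. It is also precisely the kernel of the norm map $\mathcal{N}$, which is consistent with the definition.
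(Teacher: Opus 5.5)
Your proposal is correct and has the same structure as the paper's proof: Euler's criterion for (i), the index formula \eqref{formula} with Kummer--Dedekind for (ii), and cyclicity of $\mathbb{F}_{N^3}^\times$ for (iii). The small deviations are harmless and, if anything, slightly cleaner: in (i) you use that a cubic without a root is irreducible rather than invoking cube roots of unity, and in (iii) you identify $\mathcal{G}_N(d)$ as the unique subgroup of order $N^2+N+1$ rather than as the kernel of the norm map, whose surjectivity the paper asserts without proof.
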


\begin{proof}
(i). Since $N \equiv 1 \pmod{3}$, the group $\mathbb{F}_N^\times$ contains a primitive third root of unity. It follows that a polynomial of the form $X^3 - d$ is reducible over $\mathbb{F}_N$ if and only if $d$ is a cube in $\mathbb{F}_N^\times$. By Euler's criterion for cubic residues, this holds if and only if
\[
d^{\frac{N-1}{3}} \equiv 1 \pmod{N}.
\]
By hypothesis, this is not possible. Therefore, $X^3 - d$ is irreducible over $\mathbb{F}_N$.\\[0.5mm]
(ii). Let $f(X) = X^3 - d$. Then, discriminant of $f(x)$ is $\D_f=-27d^2$. Since $\gcd(N,\,3d)=1$, it follows that $N \nmid \D_f$, and therefore, using \eqref{formula}, we conclude that $N$ does not divide the index $[\mathcal{O}_K : \mathbb{Z}[\sqrt[3]{d}]]$. By the Kummer--Dedekind theorem, the factorization of $N\mathcal{O}_K$ is determined by the factorization of $f$ modulo $N$. Since $f$ is irreducible over $\mathbb{F}_N$ by part (i), the ideal $N\mathcal{O}_K$ remains prime. Consequently, $\mathcal{R}_N$ is a field extension of $\mathbb{F}_N$ of degree $3$, and therefore
\[
\mathcal{R}_N \cong \mathbb{F}_{N^3}.
\]
(iii). By (ii), we identify $\mathcal{R}_N^\times \cong \mathbb{F}_{N^3}^\times$, which is a cyclic group of order $N^3 - 1$. Consider the norm map
\[
\mathcal{N} : \mathbb{F}_{N^3}^\times \to \mathbb{F}_N^\times, 
\qquad w \mapsto w^{1+N+N^2}.
\]
This is a surjective group homomorphism. Hence, by the First Isomorphism Theorem,
\[
|\ker(\mathcal{N})| = \frac{|\mathbb{F}_{N^3}^\times|}{|\mathbb{F}_N^\times|}
= \frac{N^3 - 1}{N - 1}
 = N^2 + N + 1.
\]
By definition, $\mathcal{G}_N(d) = \ker(\mathcal{N})$, so it has order $N^2+N+1$. Since $\mathbb{F}_{N^3}^\times$ is cyclic, $\mathcal{G}_N(d)$ is cyclic. This completes the proof of lemma.
\end{proof}

\section{The Cryptographic Prime Generation Chain}\label{sec:3}

The main difficulty in applying generalized algebraic primality tests to arbitrary integers is the requirement to partially factor in the associated group order. In the cubic setting, this reduces to finding a prime factor $q$ of $N^2 + N + 1$ satisfying the bound $q > N^{3/2} - 1$. For a large and randomly chosen integer $N$, factoring the cyclotomic polynomial $\Phi_3(N) = N^2 + N + 1$ is computationally infeasible. Moreover, the condition $q > N^{3/2} - 1$ implies that such a factor $q$ exceeds $N$ itself. This makes a direct ``top-down'' approach impractical,  since it would require knowing a larger prime in order to verify a smaller number. To overcome this difficulty, we take a different approach. We start with a known prime $q$ and construct an integer $N$ such that $q$ divides $N^2 + N + 1$.

\subsection{Analytic Construction}
We aim to construct an integer $N$ such that $N^2 + N + 1 = kq$ for some positive integer $k$. This can be rewritten as \[(2N+1)^2 + 3 = 4kq.\] Setting $S = 2N + 1$, we obtain the quadratic congruence \[S^2 \equiv -3 \pmod{q}.\] For this congruence to have a solution, $-3$ must be a quadratic residue modulo $q$. By the law of quadratic reciprocity, the Legendre symbol $\left(\frac{-3}{q}\right) = 1$ if and only if $q \equiv 1 \pmod{3}$. This leads to the following direct construction:

Choose a verified seed prime $q \equiv 1 \pmod{3}$ and compute a solution $S$ to $S^2 \equiv -3 \pmod{q}$. Since $q$ is an odd prime, the congruence yields two distinct solutions in the interval $(0, q)$, namely $S_0$ and $q - S_0$. Since their sum is $q$ (which is odd), exactly one of these solutions is  odd. By choosing this odd solution $S$, we can  define the integer 
\[
N = \frac{S - 1}{2}.
\]
By construction, the resulting integer $N$ satisfies $N^2 + N + 1 \equiv 0 \pmod{q}$, providing the required large prime factor of $N^2 + N + 1$.

\subsection{Determinism and Structural Bounds}
To ensure that the argument remains valid in a deterministic primality proof, the seed prime $q$ must be rigorously verified before it is used to construct $N$. In practice, such primes can be chosen from well-known families that admit highly efficient deterministic tests. For example, any Mersenne prime $q = 2^p - 1$ (for an odd prime $p \ge 3$)  satisfies $q \equiv 1 \pmod{3}$. Similarly, generalized Proth primes of the form $q = 3m \cdot 2^n + 1$ can be selected to satisfy this congruence condition. 

This construction also satisfies the required deterministic bound $q > N^{3/2} - 1$. Since $S = 2N + 1$, the congruence $S^2 \equiv -3 \pmod{q}$ implies that $q$ divides $(S^2 + 3)$. Consequently, \[S^2 + 3 = (2N+1)^2 + 3 = 4(N^2 + N + 1).\] Since $q$ is odd, $\gcd(4,q)=1$, which means that $4$ divides $S^2 + 3$. This gives $S^2 + 3 = 4kq$ for some integer cofactor $k$, which yields
\[
N^2 + N + 1 = kq.
\]
If we constrain the construction such that the cofactor satisfies $k < q^{1/3}$, then $N^2 + N + 1 < q^{4/3}$. Since $N^2 < N^2 + N + 1$, it follows that $N^2 < q^{4/3}$, yielding $N < q^{2/3}$. Raising both sides to the power of $3/2$ establishes $N^{3/2} < q$. As we will formally prove in Section \ref{sec:4}, this specific inequality is the Lucasian bounding condition required to mathematically certify primality. By constructing $N$ from the bottom up, we achieve this  bound automatically, completely avoiding the need for a heavy post-generation trial division.

\subsection{Cryptographic Security}
When generating primes from restricted algebraic constructions, it is important to check that they do not become check targets for classical factorization methods, such as Pollard's $p-1$ or Williams' $p+1$ algorithms. These methods are effective when $N-1$ or $N+1$ have only small prime factors. In our construction, the large prime $q$ divides $N^2 + N + 1$. We compare this with $N-1$ and $N+1$:
\[
\gcd(N-1,\, N^2 + N + 1) \le 3 \quad \text{and} \quad \gcd(N+1, N^2 + N + 1) = 1.
\]
This shows that $q$  does not divide either $N-1$ or $N+1$. Therefore, the algebraic structure used to build $N$ does not impose any special behavior on the factorization of $N-1$ or $N+1$. Consequently, these numbers behave much like those of a typical integer of similar size. As a result, the primes obtained from this construction are not easier to factor using $p-1$ or $p+1$ methods.

\begin{remark}[Motivation for the Cubic Extension]
The observation above that the large prime $q$ divides $N^2+N+1$ but not $N-1$ indicates why it is necessary to work in a cubic extension. Classical primality tests, such as Pocklington's criterion or standard Lucas sequences, rely heavily on discovering a large prime factor of $N-1$ or $N+1$, and therefore fail entirely in this situation. Working in the cubic quotient ring $\mathcal{R}_N$, one can instead make use of the factor of $N^2+N+1$ in a natural way.
\end{remark}


\section{The Cubic Lucasian Certificate}\label{sec:4}

In Section \ref{sec:3}, we established an efficient method for constructing a candidate integer $N$ alongside a large known seed prime $q$. However, construction alone does not provide a mathematical proof of primality. To verify that $N$ is indeed the prime, we must translate the structural bound $k < \sqrt{N}$ (which implies $q > N^{3/2}-1$) into a computable algebraic test. In this section, we develop the formal theoretical framework for this procedure, yielding a deterministic primality certificate based on the generalized unitary group.
With the algebraic foundations established in Section \ref{sec:2}, we first prove a foundational property of the group $\mathcal{G}_N(d)$, which acts as a cubic analog of Fermat's Little Theorem.

\begin{theorem}
\label{thm:cubic_fermat}
Let $N \equiv 1 \pmod{3}$ be a prime and  $d$ be a cubefree integer satisfying $\gcd(N,3d)=1$ and $d^{\frac{N-1}{3}} \not\equiv 1 \pmod{N}$. Set $\mathcal{R}_N = \mathcal{O}_K / N\mathcal{O}_K$. Then for every $z \in \mathcal{R}_N^\times$, we have $z^{N^3-1} \equiv 1 \pmod{N}$. Moreover, if $w \equiv z^{N-1} \pmod{N}$, then $w^{N^2+N+1} \equiv 1 \pmod{N}$.
\end{theorem}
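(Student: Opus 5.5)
The plan is to reduce everything to Lemma~\ref{lem:ring_structure} and then apply Lagrange's theorem in the finite field $\mathcal{R}_N$. One small point needs attention first. The lemma was stated for $K=\mathbb{Q}(\sqrt[3]{d})$ with $d$ not a perfect cube, while here $d$ is only assumed cubefree. So I will first note that if $d$ were a perfect cube (a cubefree cube is $\pm 1$), then $d^{\frac{N-1}{3}}=(\pm1)^{N-1}\equiv 1 \pmod N$, because $N-1$ is even. This contradicts the hypothesis. Hence $d$ is not a cube, and $X^3-d$ is the minimal polynomial of $\theta$. All the hypotheses of Lemma~\ref{lem:ring_structure} then hold: $N\equiv 1 \pmod 3$ is prime, $\gcd(N,3d)=1$, and $d^{\frac{N-1}{3}}\not\equiv 1 \pmod N$. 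By parts (i)--(ii) of the lemma, $N\mathcal{O}_K$ is prime and $\mathcal{R}_N\cong\mathbb{F}_{N^3}$.

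Next, since $\mathcal{R}_N$ is a field with $N^3$ elements, $\mathcal{R}_N^\times$ is a group of order $N^3-1$. Lagrange's theorem then gives $z^{N^3-1}=1$ in $\mathcal{R}_N$ for every $z\in\mathcal{R}_N^\times$. Reading this back in $\mathcal{O}_K$, it is exactly the congruence $z^{N^3-1}\equiv 1 \pmod{N}$, meaning modulo the ideal $N\mathcal{O}_K$.

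For the second assertion, set $w\equiv z^{N-1}$. Then
\[
w^{N^2+N+1}\equiv z^{(N-1)(N^2+N+1)}=z^{N^3-1}\equiv 1 \pmod N,
\]
using the factorization $N^3-1=(N-1)(N^2+N+1)$. Equivalently, $w=z^{N-1}$ lies in the kernel of the norm map $\mathcal{N}$, which is $\mathcal{G}_N(d)$ by Lemma~\ref{lem:ring_structure}(iii).

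There is no real obstacle here. The only step needing care is the first one: checking that the hypotheses (cubefree rather than non-cube, and working modulo $N\mathcal{O}_K$) really match those of Lemma~\ref{lem:ring_structure}, so that $\mathcal{R}_N$ is indeed a field of order $N^3$. After that, the argument is just Lagrange's theorem and the factorization of $N^3-1$.
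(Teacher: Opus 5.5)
Your proof is correct and follows the paper's argument: invoke Lemma~\ref{lem:ring_structure} to get $\mathcal{R}_N\cong\mathbb{F}_{N^3}$, apply Lagrange's theorem to obtain $z^{N^3-1}\equiv 1 \pmod N$, and then use $N^3-1=(N-1)(N^2+N+1)$ for the second claim. Your preliminary check is correct and worth keeping: since $d=m^3$ with $m=\pm1$ would give $d^{\frac{N-1}{3}}=m^{N-1}\equiv 1$, the hypotheses force $d$ to be a non-cube, which is what the lemma actually requires and which the paper passes over silently.
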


\begin{proof}
Since $N$ is a prime and satisfies the conditions of Lemma \ref{lem:ring_structure}, the ideal $N\mathcal{O}_K$ is prime, yielding the field isomorphism $\mathcal{R}_N \cong \mathbb{F}_{N^3}$. Consequently, its multiplicative group has order $N^3-1$. By Lagrange's theorem, any element $z \in \mathcal{R}_N^\times$ satisfies $z^{N^3-1} \equiv 1 \pmod{N}$. Now, let $w \equiv z^{N-1} \pmod{N}$. We observe that
$$
w^{N^2+N+1} \equiv (z^{N-1})^{N^2+N+1} \equiv z^{N^3-1} \equiv 1 \pmod{N}.
$$
Thus, the element $w$ is in the generalized unitary group $\mathcal{G}_N(d)$.
\end{proof}

Building upon this, we now establish the core deterministic primality criterion for our constructed integers.
\begin{theorem}\label{thm:cubic_iff}
Let $q > 3$ be a prime and $d$ be a squarefree integer. Let $K = \mathbb{Q}(\sqrt[3]{d})$ be a cubic number field with ring of integers $\mathcal{O}_K$. Let $N \equiv 1 \pmod 3$ be an integer such that $N^2+N+1 = kq$ for some cofactor $k < \sqrt{N}$. Assume $\gcd(N, 3d) = 1$ and $d^{\frac{N-1}{3}} \not\equiv 1 \pmod N$. Set $\mathcal{R}_N = \mathcal{O}_K/N\mathcal{O}_K$. Suppose there exists an element $w \in \mathcal{G}_N(d)$ such that $w^k \not\equiv 1 \pmod N$. Then $N$ is prime if and only if
$$\Phi_q\left(w^k\right) \equiv 0 \pmod N,$$ 
where $\Phi_q(x) = x^{q-1} + x^{q-2} + \dots + 1$ denotes the $q$-th cyclotomic polynomial.
\end{theorem}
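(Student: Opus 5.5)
The plan is to prove the two directions separately, working throughout with $u := w^k \in \mathcal{R}_N$ and the identity $u^q - 1 = (u-1)\,\Phi_q(u)$ in $\mathbb{Z}[x]$ evaluated at $u$. First I record the bound that drives the converse. Since $kq = N^2+N+1$ and $k < \sqrt{N}$, we get
$$q = \frac{N^2+N+1}{k} > \frac{N^2}{\sqrt N} = N^{3/2}.$$
In particular $q > N$.

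\emph{Necessity.} Assume $N$ is prime. The hypotheses $N \equiv 1 \pmod 3$, $\gcd(N,3d)=1$ and $d^{(N-1)/3}\not\equiv 1 \pmod N$ are exactly those of Lemma \ref{lem:ring_structure}. Hence $\mathcal{R}_N \cong \mathbb{F}_{N^3}$ is a field, and $\mathcal{G}_N(d)$ is cyclic of order $N^2+N+1 = kq$. Since $w \in \mathcal{G}_N(d)$, we have $u^q = w^{kq} = 1$. By hypothesis $u \neq 1$, so $u-1$ is a nonzero element of a field and hence invertible. Therefore $(u-1)\Phi_q(u) = u^q - 1 = 0$ forces $\Phi_q(u) = 0$ in $\mathcal{R}_N$, that is, $\Phi_q(w^k)\equiv 0 \pmod N$.

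\emph{Sufficiency.} Assume $\Phi_q(u) \equiv 0 \pmod N$ and suppose, for contradiction, that $N$ is composite. Then $N$ has a prime divisor $p \le \sqrt N$. Choose a prime ideal $\mathfrak{P}$ of $\mathcal{O}_K$ lying above $p$; its residue field $F = \mathcal{O}_K/\mathfrak{P}$ has $p^f$ elements with $f \le 3$. Since $N\mathcal{O}_K \subseteq \mathfrak{P}$, I reduce the relation modulo $\mathfrak{P}$ and obtain $\Phi_q(\bar u) = 0$ in $F$, hence $\bar u^{\,q} = 1$.

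The key step is to show $\bar u \neq 1$, so that $\bar u$ has exact order $q$ in $F^\times$. If $\bar u = 1$, then $0 = \Phi_q(\bar u) = \Phi_q(1) = q$ in $F$. This means $q \in \mathfrak{P}\cap\mathbb{Z} = p\mathbb{Z}$, so $p = q$. That contradicts $p \le \sqrt N < N < q$. (Notice that this direction needs no field structure on $\mathcal{R}_N$ and does not use the hypothesis $w^k\not\equiv 1$ at all; that hypothesis only served the forward direction.)

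Hence $q$ divides $|F^\times| = p^f - 1$, which gives
$$q \le p^f - 1 \le p^3 - 1 < N^{3/2}.$$
This contradicts $q > N^{3/2}$. Therefore every prime divisor of $N$ exceeds $\sqrt N$, and $N$ is prime.

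I expect the only delicate point to be the passage to a residue field modulo a prime $\mathfrak{P}\mid p$. This is the step that replaces the unavailable field structure of $\mathcal{R}_N$ when $N$ may be composite. The exclusion of $\bar u = 1$ via $\Phi_q(1) = q$ is what makes the argument work, and I will check it carefully. Everything else is the routine bound comparison above.
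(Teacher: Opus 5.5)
Your proposal is correct and follows the paper's proof: the forward direction uses the field structure from Lemma \ref{lem:ring_structure} and cancels the nonzero factor $w^k-1$, and the converse takes a prime divisor $\le\sqrt N$, shows $w^k$ has order exactly $q$ modulo it, and contradicts $q>N^{3/2}-1$. The differences are cosmetic. You reduce modulo a prime ideal $\mathfrak P$, which is what the paper does in Theorem \ref{thm:constructive_cubic}, rather than modulo $r\mathcal{O}_K$ as in this proof. You also rule out $p=q$ by size rather than by $q\nmid N$.
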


\begin{proof}
Let $X = w^k$. First, assume that $N$ is a prime. Then we show that $ \Phi_q\left(w^k\right) \equiv 0 \pmod N $. By Lemma \ref{lem:ring_structure}, the ideal $N\mathcal{O}_K$ is prime, which implies that $\mathcal{R}_N \cong \mathbb{F}_{N^3}$. Using  $w \in \mathcal{G}_N(d)$, we have $w^{N^2+N+1} \equiv 1 \pmod N$. Substituting  $N^2+N+1 = kq$, we obtain $X^q \equiv 1 \pmod N$. Using cyclotomic factorization over a finite field, we have
$$ X^q - 1 = (X - 1)\Phi_q(X) \equiv 0 \pmod N. $$
By hypothesis, $w^k \not\equiv 1 \pmod N$, which means $X \not\equiv 1 \pmod N$. Since $\mathcal{R}_N$ is a field and contains no zero divisors, we  conclude that $\Phi_q(X) \equiv 0 \pmod N$.

Conversely, assume $\Phi_q(X) \equiv 0 \pmod N$. Since $(X-1)\Phi_q(X) = X^q - 1$, this implies \[X^q \equiv 1 \pmod N.\] This means $w^{kq} \equiv 1 \pmod N$. We proved by contradiction. Suppose $N$ is composite and let $r$ be a prime divisor of $N$ such that $r \le \sqrt{N}$. Since $r \mid N$, the modular congruence holds over $r$, which yields $\Phi_q(X) \equiv 0 \pmod r$, and therefore $X^q \equiv 1 \pmod r$. If $X \equiv 1 \pmod r$, then we have \[\Phi_q(1) = q \equiv 0 \pmod r.\] This requires $r = q$. Since $r \mid N$, this implies $q \mid N$. However, our construction  $N^2+N+1 = kq$, which can be rewritten as \[N(N+1)+1 = kq.\] If $q \mid N$, then $q$ must  divide $1$, which is not possible. Thus, $X \not\equiv 1 \pmod r$. Since $X^q \equiv 1 \pmod r$ and $X \not\equiv 1 \pmod r$, the multiplicative order of $X$ in the unit group $\mathcal{R}_r^\times$ is exactly $q$. Recalling that $X = w^k$, the element $w$ itself must possess a multiplicative order in $\mathcal{R}_r^\times$ that is a multiple of $q$. Regardless of how ideal $r\mathcal{O}_K$ factors in, the absolute maximum possible multiplicative order of any element in the cubic quotient ring $\mathcal{O}_K/r\mathcal{O}_K$ is strictly bounded by $r^3 - 1$. This implies $q \le r^3 - 1$. Now using bound $r \le \sqrt{N}$, we obtain
$$ q \le (\sqrt{N})^3 - 1 = N^{3/2} - 1. $$
Substituting $q = \frac{N^2+N+1}{k}$ into the above inequality gives:
$$ \frac{N^2+N+1}{k} \le N^{3/2} - 1. $$
After rearranging, we obtain
$$ k \ge \frac{N^2+N+1}{N^{3/2}-1} > \sqrt{N}. $$
This contradicts $k<\sqrt{N}$. Therefore, $N$ is prime. This completes the proof of theorem.
\end{proof}


\section{The Constructive Cubic Primality Certificate}\label{sec:5}

In the classical Elliptic Curve Primality Proving (ECPP) \cite{MR1199989}, one starts with a given target number $N$ and studies certain group orders attached to it. The goal is to break these orders into smaller prime factors and build a recursive chain of primes that leads to a proof that $N$ is prime. 

Our approach is different. Instead of starting with $N$ and trying to analyze it top-down, we start with a known seed prime $q$ and use it to construct $N$ bottom-up. In particular, we use the quadratic relation $S^2 \equiv -3 \pmod q$ to generate candidates in a highly controlled way. Due to this, the structural information needed to verify that $N$ is prime arises naturally during the construction phase. No separate factorization search step is required, as the necessary primality certificate is already embedded in the process.

\subsection*{Certificate Structure:}

Let $N \equiv 1 \pmod{3}$ be an integer constructed from a known prime $q \equiv 1 \pmod{3}$ such that $N^2 + N + 1 = k q.$ We define the data required to mathematically certify that $N$ is prime. A constructive cubic certificate for $N$ consists of the following tuple:
\begin{itemize}
    \item a verified prime $q$,
    \item a cubefree integer $d$ such that $\gcd(N,3d)=1$ and $d^{\frac{N-1}{3}} \not\equiv 1 \pmod N,$
    \item an element $w \in \mathcal{R}_N^\times$.
\end{itemize}

To verify the certificate, it is sufficient to check that:
\begin{equation*}
   \Phi_q(w^k) \equiv 0 \pmod N \quad \text{and} \quad q > N^{3/2} - 1.
\end{equation*}

\begin{theorem}
\label{thm:constructive_cubic}
If a constructed integer $N$ possesses a valid cubic certificate as structured above, then $N$ is  prime.
\end{theorem}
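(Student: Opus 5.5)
Since $N$ is composite, we may argue by contradiction: assume $N$ is composite and pick a prime divisor $r \mid N$ with $r \le \sqrt{N}$. The proof is essentially the converse direction of Theorem~\ref{thm:cubic_iff}, and the only thing we need from the certificate is the congruence $\Phi_q(w^k)\equiv 0 \pmod N$ together with $q$ being a verified prime and $q > N^{3/2}-1$. The conditions on $d$ ($\gcd(N,3d)=1$, the cubic non-residue test) are not needed for sufficiency. They only matter for completeness of the certificate, i.e. that a prime $N$ admits one. Note also that we do not even need $w\in\mathcal{G}_N(d)$ or $w^k\not\equiv 1$ as separate hypotheses. Write $X = w^k \in \mathcal{R}_N$ and work in $\mathcal{R}_r = \mathcal{O}_K/r\mathcal{O}_K$, which is a quotient of $\mathcal{R}_N$ since $N\mathcal{O}_K \subseteq r\mathcal{O}_K$.

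First, reduce $\Phi_q(X)\equiv 0$ modulo $r$, so $X^q\equiv 1 \pmod r$ via $(X-1)\Phi_q(X)=X^q-1$. Next, rule out $X\equiv 1 \pmod r$. That would give $\Phi_q(1)=q\equiv 0 \pmod r$, so $r=q$ and $q\mid N$. This contradicts $N^2+N+1=kq$, since it would force $q\mid 1$. Alternatively, $q\mid N$ is impossible because $q>N^{3/2}-1\ge N$ for $N\ge 4$.

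Hence $X$ is a unit of $\mathcal{R}_r$ (its inverse is $X^{q-1}$) of multiplicative order exactly $q$. The main point to handle carefully is bounding element orders in $\mathcal{R}_r^\times$ without knowing how $r$ splits. I would use $|\mathcal{R}_r^\times|\le |\mathcal{R}_r|-1 = r^3-1$ (because $0$ is not a unit) and Lagrange's theorem. This gives $q\le r^3-1\le N^{3/2}-1$, contradicting the certificate inequality $q>N^{3/2}-1$.

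Therefore $N$ has no prime divisor $r \le \sqrt N$, and $N$ is prime. If the certificate instead only records $k<\sqrt N$, I would convert as in Theorem~\ref{thm:cubic_iff}: $q=(N^2+N+1)/k > N^{3/2}-1$. I expect no real obstacle. The only subtle steps are justifying that $X$ is invertible modulo $r$ and that the ring-order bound $r^3-1$ holds regardless of the splitting of $r\mathcal{O}_K$; both are handled above.
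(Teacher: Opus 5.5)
Your proof is correct and has the same skeleton as the paper's: a prime $r\le\sqrt N$, then $X^q\equiv 1$, then excluding $X\equiv 1$ via $\Phi_q(1)=q$, then order exactly $q$, then Lagrange. The difference is the finite ring in which you bound element orders. The paper picks a prime ideal $\mathfrak r$ above $r$ and works in the residue field $\mathcal{O}_K/\mathfrak r$, of order $r^f$ with $f\le 3$. Its subsequent Remark presents this passage to a field as the way to avoid analysing how $r$ splits. You stay in $\mathcal{O}_K/r\mathcal{O}_K$, which is exactly the argument the paper itself uses in the converse half of Theorem~\ref{thm:cubic_iff}.

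Your version shows that the field structure is not needed. First, $X^q=1$ follows from $(X-1)\Phi_q(X)=X^q-1$ in any commutative ring, and this makes $X$ a unit. Second, its order is exactly $q$ because $q$ is prime and $X\neq 1$. Third, the count $|(\mathcal{O}_K/r\mathcal{O}_K)^\times|\le r^3-1$ holds however $r$ splits. The paper's route yields the sharper bound $r^f-1$, which is never used; yours avoids choosing $\mathfrak r$ at all.

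You are also right that the conditions on $d$, $w\in\mathcal{G}_N(d)$ and $w^k\not\equiv 1$ play no role in sufficiency; the paper's proof does not use them either. (Minor wording point: your opening should read ``suppose $N$ is composite'', not ``since $N$ is composite''.)
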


\begin{proof}
Assume, for the sake of contradiction, that $N$ is a composite number. Let $r$ be its smallest prime divisor of $N$, then $r \le \sqrt{N}$. Let $\mathfrak{r}$ be a prime ideal of the ring of algebraic integers $\mathcal{O}_K$ that lies above the prime $r$. The quotient ring $\mathbb{F}_{\mathfrak{r}} = \mathcal{O}_K/\mathfrak{r}$ forms a finite field of size $r^f$ with an inertial degree $f \le 3$. Consequently, the cardinality of its multiplicative unit group is strictly bounded by:
$$ |\mathbb{F}_{\mathfrak{r}}^\times| = r^f - 1 \le r^3 - 1. $$
Set $X = w^k$. Reducing the certificate's cyclotomic congruence modulo, our prime ideal $\mathfrak{r}$, we obtain:
$$ \Phi_q(X) \equiv 0 \pmod{\mathfrak{r}}. $$
Since $\mathbb{F}_{\mathfrak{r}}$ is a  field (and thus an integral domain), it does not contain zero divisors. Evaluating the standard identity $$X^q - 1 = (X - 1)\Phi_q(X)$$ within this field implies that either $X \equiv 1 \pmod{\mathfrak{r}}$ or $X^q \equiv 1 \pmod{\mathfrak{r}}$. If we assume $X \equiv 1 \pmod{\mathfrak{r}}$, substituting this into the cyclotomic polynomial yields $\Phi_q(1) = q \equiv 0 \pmod{\mathfrak{r}}$, which implies that $r = q$. However, because $q \mid (N^2+N+1)$ by construction and $r \mid N$ by assumption, the equivalence $r = q$ would imply that $q$ divides the polynomial difference $(N^2+N+1) - N(N+1) = 1$. This is not possible. Hence, \[X \not\equiv 1 \pmod{\mathfrak{r}}.\] Since $X^q \equiv 1 \pmod{\mathfrak{r}}$ but $X \not\equiv 1 \pmod{\mathfrak{r}}$, the element $X$ must have a multiplicative order of exactly $q$ in the group $\mathbb{F}_{\mathfrak{r}}^\times$. By Lagrange's theorem, the order of an element must  divide the order of the group, implying that $q$ must be less than or equal to $|\mathbb{F}_{\mathfrak{r}}^\times|$. Applying our structural bounds, we conclude that:
\[ q \le |\mathbb{F}_{\mathfrak{r}}^\times| \le r^3 - 1.\]
Since $r \le \sqrt{N}$, it follows that
$$ q \le r^3 - 1 \le \big(\sqrt{N}\big)^3 - 1 = N^{3/2} - 1. $$
This contradicts the initial structural certificate condition $q > N^{3/2} - 1$. Therefore, our assumption that $N$ is composite is false, proving that $N$ is prime. This completes the proof of theorem.
\end{proof}

\begin{remark}
Working in the ring of integers $\mathcal{O}_K$, rather than the polynomial quotient ring $\mathcal{R}_r = \mathbb{F}_r[x]/\langle x^3 - d \rangle$, is a necessary structural choice. If one works directly in $\mathcal{R}_r$, the factorization of $x^3 - d$ modulo the unknown prime $r$ dictates the ring's structure. If the polynomial splits, the ring decomposes, complicating the unit group bound. In contrast, localizing at a prime ideal $\mathfrak{r} \subset \mathcal{O}_K$ guarantees that the quotient $\mathcal{O}_K/\mathfrak{r}$ is a true finite field. This immediately provides the strict upper bound of $r^3 - 1$ for the multiplicative group order, completely bypassing the need for polynomial factorization analysis.
\end{remark}

\subsection*{Algorithmic Optimization and Verification}

While Theorem \ref{thm:constructive_cubic} holds for any cubefree $d$, executing polynomial arithmetic in an arbitrary cubic ring computationally requires tracking fractional integral bases. To optimize the verification algorithm and avoid expensive modular inversions, we take the algorithmic condition that $d$ must be squarefree and $d \not\equiv \pm 1 \pmod 9$. By Corollary \ref{cor:monogenic}, the field $K=\Q(\theta)$, where $\theta$ is the root of $x^3-d$, is monogenic. Although the verification of $N$ in the cubic ring $\mathcal{R}_N$ is sufficient for our purposes, carrying out these computations can be time-consuming, especially when $N$ is composite. To reduce unnecessary calculations, we first perform a simple preliminary check. Before working in the cubic ring, we test whether
\[
2^{N-1}\equiv 1 \pmod N.
\]
This is the classical Fermat test to base $2$, which requires only ordinary integer arithmetic and can be carried out very quickly. If $N$ fails this test, it is immediately declared composite and no further computation is needed.

\begin{algorithm}[H]
\caption{Non-Recursive Certificate Verification}
\label{alg:certificate_verify}
\begin{algorithmic}[1]
\REQUIRE Constructed candidate $N$ and its cubic certificate $(q, d, w)$, with $d$ squarefree and $d \not\equiv \pm 1 \pmod 9$.
\ENSURE Prime or Reject

\STATE Verify fast filter: Compute $2^{N-1} \equiv 1 \pmod N$ in standard integers $\mathbb{Z}/N\mathbb{Z}$. \COMMENT{Rejects composites instantly}

\STATE Check structural bounds: Ensure $q \equiv 1 \pmod 3$ and $q > N^{3/2} - 1$.
\STATE Check field parameters: Ensure $\gcd(N, 3d) = 1$ and $d^{\frac{N-1}{3}} \not\equiv 1 \pmod N$.
\STATE Compute cofactor $k = (N^2+N+1)/q$.

\STATE Compute projected root $X \equiv w^k \pmod N$ in $\mathcal{R}_N$.

\STATE Verify cyclotomic property: Verify $\Phi_q(X) \equiv 0 \pmod N$ by confirming:
\STATE \quad $X^q \equiv 1 \pmod N$ \COMMENT{Implicitly verifies $w \in \mathcal{G}_N(d)$}
\STATE \quad $\gcd(\text{Norm}(X-1), N) = 1 \quad$ \COMMENT{Ensures $X \not\equiv 1 \pmod N$}

\IF{all conditions are satisfied}
    \RETURN Prime
\ELSE
    \RETURN Reject
\ENDIF
\end{algorithmic}
\end{algorithm}


\section{The Cryptographic Prime Generation Algorithm and Complexity}\label{sec:6}

In this section, we describe the deterministic construction of the prime candidate $N$. Since the condition $q > N^{3/2}-1$ makes random generation of candidates impractical. We construct $N$ directly together with the associated parameters required for the cubic certificate. Algorithm~\ref{alg:generation_part1} gives the complete procedure for generating $N$ and the corresponding cubic certificate parameters from Theorem~\ref{thm:cubic_fermat}, which are subsequently used in the verification stage.
\begin{algorithm}[H]
\caption{Construction of a Prime Candidate}
\label{alg:generation_part1}
\begin{algorithmic}[1]
\REQUIRE Desired bit-length $L$
\ENSURE Candidate integer $N$ and seed prime $q$

\LOOP
    \STATE Generate or retrieve a trusted seed prime $q \equiv 1 \pmod 3$ such that $q>2^{1.5L}$.
    \STATE Compute $S_0$ satisfying $S_0^2\equiv -3 \pmod q$.
    \STATE Choose $S\in\{S_0,q-S_0\}$ such that $S$ is odd.
    \STATE Set $N\leftarrow (S-1)/2$.
    
    \IF{$N$ does not have bit-length $L$ \OR $q\le N^{3/2}-1$}
        \STATE \textbf{continue}.
    \ENDIF
    
    \STATE \textbf{break}.
\ENDLOOP

\RETURN $(N,q)$
\end{algorithmic}
\end{algorithm}
\begin{algorithm}[H]
\caption{Verification and Certificate Construction}
\label{alg:generation_part2}
\begin{algorithmic}[1]
\REQUIRE Candidate $(N,q)$ from Algorithm~\ref{alg:generation_part1}
\ENSURE A proven prime $N$ together with a cubic certificate $(q,d,w)$

\IF{$N\equiv 1\pmod 3$ and passes the fast Cubic-SPRP filter}
    \STATE Find a squarefree integer $d\not\equiv \pm1\pmod 9$ such that
    $\gcd(N,3d)=1$ and
    $d^{\frac{N-1}{3}}\not\equiv 1\pmod N$.
    
    \STATE Choose a random $z\in\mathcal{R}_N^\times$ and compute
    $w\equiv z^{N-1}\pmod N$.
    
    \IF{Algorithm~\ref{alg:certificate_verify} returns prime for $(q,d,w)$}
        \RETURN $(N,q,d,w)$.
    \ENDIF
\ENDIF
\end{algorithmic}
\end{algorithm}

\begin{theorem}\label{complexity}
For a fixed seed prime $q$, the bit complexity of constructing and deterministically verifying the candidate prime $N$ is  $\tilde{\mathcal{O}}(\log^2 N)$.
\end{theorem}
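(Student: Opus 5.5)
The plan is a step-by-step cost accounting of Algorithms~\ref{alg:generation_part1}, \ref{alg:generation_part2} and \ref{alg:certificate_verify}. Throughout, write $n=\log N$ and let $\mathsf{M}(n)=\mathcal{O}(n\log n)$ denote the bit cost of multiplying two $n$-bit integers, using Harvey--van der Hoeven or, up to $\log\log$ factors, Sch\"onhage--Strassen. Because $q$ is fixed, the loop in Algorithm~\ref{alg:generation_part1} runs exactly once. The construction forces $N<q$, and the bound $q>N^{3/2}-1$ forces $q$ to be polynomial in $N$: for the intended regime $q\approx N^2$ this means $\log q=\mathcal{O}(n)$, and I will assume this throughout. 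Every arithmetic operation modulo $q$ or modulo $N$ therefore costs $\tilde{\mathcal{O}}(n)$. The target is to show that each step amounts to $\mathcal{O}(n)$ such operations, possibly repeated a constant number of times.

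\textbf{Construction.} I would compute $S_0$ with $S_0^2\equiv-3\pmod q$ using Cipolla's algorithm, or Tonelli--Shanks after finding a quadratic non-residue. This takes $\mathcal{O}(\log q)=\mathcal{O}(n)$ multiplications modulo $q$, which is $\tilde{\mathcal{O}}(n^2)$, with an expected constant number of random trials to find the auxiliary non-residue. Choosing the odd root, setting $N=(S-1)/2$, and checking the bit-length are all $\mathcal{O}(n)$. The bound $q>N^{3/2}-1$ can be checked without irrational arithmetic as the integer inequality $(q+1)^2>N^3$, at cost $\mathcal{O}(\mathsf{M}(n))$. The cofactor $k=(N^2+N+1)/q$ is one exact division, also $\tilde{\mathcal{O}}(n)$.

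\textbf{Verification.} The Fermat test $2^{N-1}\bmod N$ and each test $d^{(N-1)/3}\bmod N$ are single modular exponentiations, each $\tilde{\mathcal{O}}(n^2)$. In $\mathcal{R}_N=(\mathbb{Z}/N\mathbb{Z})[x]/(x^3-d)$, which is valid by Corollary~\ref{cor:monogenic}, one ring multiplication uses a constant number of multiplications modulo $N$: nine products and two multiplications by $d$. So exponentiation by an exponent of $\mathcal{O}(n)$ bits costs $\tilde{\mathcal{O}}(n^2)$. This applies to each of $w=z^{N-1}$, then $X=w^k$ with $k<\sqrt N$, and then $X^q$ with $\log q=\mathcal{O}(n)$. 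The norm of $X-1$ is a $3\times3$ determinant over $\mathbb{Z}/N\mathbb{Z}$, and the subsequent gcd costs $\tilde{\mathcal{O}}(n)$. These last two checks, $X^q=1$ and $X-1$ a unit, are exactly what certify $\Phi_q(X)\equiv0\pmod N$: the identity $(X-1)\Phi_q(X)=X^q-1=0$ holds in $\mathcal{R}_N$, and cancelling the unit $X-1$ gives $\Phi_q(X)=0$. Together these confirm the hypotheses of Theorem~\ref{thm:constructive_cubic}. Summing a constant number of $\tilde{\mathcal{O}}(n^2)$ terms gives $\tilde{\mathcal{O}}(\log^2 N)$.

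\textbf{Main obstacle.} The delicate step is bounding the number of trials needed to find the auxiliary $d$, together with the random $z$ for which $w^k\neq1$. For $z$, since $\mathcal{G}_N(d)$ is cyclic of order $kq$ by Lemma~\ref{lem:ring_structure}, all but a $1/q$ fraction of $w\in\mathcal{G}_N(d)$ satisfy $w^k\ne1$. The plan is to use that the map $z\mapsto z^{N-1}$ is surjective onto $\mathcal{G}_N(d)$, so a single trial of $z$ succeeds with overwhelming probability. For $d$, two thirds of the residues modulo $N$ are cubic non-residues. I would restrict to small squarefree $d\not\equiv\pm1\pmod 9$ with $\gcd(d,3N)=1$, and argue that the expected number of trials is $\mathcal{O}(1)$. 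Making this rigorous requires either a heuristic equidistribution assumption or, under GRH, an explicit bound on the least cubic non-residue of size $\mathcal{O}(\log^2N)$; the latter would cost only polylogarithmic extra factors, absorbed into $\tilde{\mathcal{O}}$. I would state this dependence explicitly, since it is the only place where the bound is expected rather than worst-case.
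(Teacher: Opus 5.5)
Your core accounting matches the paper's proof. That proof only prices the exponentiations $X=w^k$ and $X^q$ in Algorithm~\ref{alg:certificate_verify}. It uses monogenicity to make each multiplication in $\mathcal{R}_N$ a constant number of multiplications modulo $N$, and $k,q\mid N^2+N+1$ to give both exponents $\mathcal{O}(\log N)$ bits; norm and gcd are lower-order terms. Your version is more complete. It also prices the square root of $-3$ modulo $q$, the Fermat and cubic-residue tests, $w=z^{N-1}$, and the searches for $d$ and $z$, all of which the paper's proof silently takes as given. Your remark that $X^q=1$ together with $X-1$ being a unit is exactly what certifies $\Phi_q(X)=0$ also makes explicit something the paper leaves implicit.

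Two steps need correcting, however.

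First, $\log q=\mathcal{O}(\log N)$ should not be an assumption. It also does not follow from $q>N^{3/2}-1$, which is a lower bound on $q$. It follows from the construction itself: $q\mid N^2+N+1$ with $N\ge 1$ gives $q\le N^2+N+1$, and likewise $k\le N^2+N+1$. This divisibility is exactly what the paper uses to bound both exponents.

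Second, your GRH fallback for $d$ does not give the claimed bound. Under GRH the least cubic non-residue is $\mathcal{O}(\log^2 N)$, so in the worst case a sequential search tests $\mathcal{O}(\log^2 N)$ candidates. Each test is the exponentiation $d^{(N-1)/3}$, costing $\tilde{\mathcal{O}}(\log^2 N)$, so this step is $\tilde{\mathcal{O}}(\log^4 N)$. The soft-$\mathcal{O}$ in $\tilde{\mathcal{O}}(\log^2 N)$ only hides factors $(\log\log N)^{\mathcal{O}(1)}$, so an extra factor of $\log^2 N$ is not absorbed. A least non-residue bound also does not by itself produce a squarefree $d\not\equiv\pm1\pmod 9$. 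You would take $d=2$ if $2$ is a non-residue, and otherwise $d=\ell$ or $d=2\ell$ for the least prime non-residue $\ell$.

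So what your argument actually supports is twofold:
\begin{itemize}
\item $\tilde{\mathcal{O}}(\log^2 N)$ for verification once $(d,w)$ is in hand, which is all the paper's proof establishes;
\item expected $\tilde{\mathcal{O}}(\log^2 N)$ for the construction and searches, with the search for $d$ only heuristic.
\end{itemize}

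A minor point: Tonelli--Shanks costs $\mathcal{O}(\log q+s^2)$ multiplications when $2^s$ exactly divides $q-1$. For the Proth-type seeds $3m\cdot 2^n+1$ the paper recommends, this is $\mathcal{O}(\log^2 q)$. Use Cipolla instead, or for Mersenne seeds (where $q\equiv 3\pmod 4$) the deterministic $S_0=(-3)^{(q+1)/4}$.
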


\begin{proof}
The main computational cost comes from the modular exponentiations used in the deterministic primality certificate (Algorithm~\ref{alg:certificate_verify}). Since the base field $K=\mathbb{Q}(\sqrt[3]{d})$ is monogenic, arithmetic in the quotient ring $\mathcal{R}_N$ can be carried out using polynomial multiplication modulo $x^3-d$ and $N$. This avoids the more expensive basis computations that appear in non-monogenic number fields, so each ring multiplication reduces to a small number of standard integer multiplications. Using fast integer arithmetic, multiplication modulo $N$ requires $\tilde{\mathcal{O}}(\log N)$ bit operations. To verify cyclotomic conditions, we first compute $X \equiv w^k \pmod N$, and then verify that $X^q \equiv 1 \pmod N$. These steps require $\mathcal{O}(\log k)$ and $\mathcal{O}(\log q)$ multiplications, respectively. Since both $k$ and $q$ divide $N^2+N+1$, we have $k,q = \mathcal{O}(N^2)$, and therefore $\log k, \log q = \mathcal{O}(\log N)$. The total number of ring multiplications is $\mathcal{O}(\log N)$. Combining this with the cost of each multiplication, the total bit complexity of the exponentiation phase is
\[
\mathcal{O}(\log N)\cdot \tilde{\mathcal{O}}(\log N)
= \tilde{\mathcal{O}}(\log^2 N).
\]
Finally, any required greatest common divisor or resultant computations can be performed using fast Euclidean algorithms in $\tilde{\mathcal{O}}(\log^2 N)$ bit operations. These do not increase the overall complexity, and the result follows. This completes the proof of theorem.
\end{proof}

\section{Cubic Strong Probable prime }\label{sec:7}
The results of the previous sections provide a reliable method for proving that a number is prime,
but full verification can be computationally expensive. In practice, it is therefore useful to first
eliminate obvious composite candidates before applying the complete test. To achieve this, we
begin this section by defining Cubic Strong Probable Prime.

\begin{definition}
Let $N \equiv 1 \pmod 3$ be an odd integer satisfying $\gcd(N,3d)=1$ and $N^2+N+1=kp^\ell,$ where $p$ is a prime. Let $\mathcal{R}_N=\mathcal{O}_K/N\mathcal{O}_K$ and $K=\mathbb{Q}(\sqrt[3]{d}), $ with $d$ cube-free. For $z\in\mathcal{R}_N^\times$, define $w\equiv z^{N-1}\pmod N.$ We say that $N$ is a \emph{cubic strong probable prime} (Cubic-SPRP) to the base $z$ if one of the following conditions holds:

\begin{enumerate}
\item[\textup{(i)}] $w^k \equiv 1 \pmod N$;

\item[\textup{(ii)}] There exists an integer $j$ with $0\le j<\ell$ such that $\Phi_p\!\left(w^{kp^j}\right)\equiv 0 \pmod N.$
\end{enumerate}

If $N$ is composite and satisfies either of the above conditions, then $N$ is called a \emph{cubic strong pseudoprime} to the base $z$.
\end{definition}

The following result shows that every prime is a Cubic-SPRP to every base.

\begin{theorem}
\label{thm:sprp_true_prime}
Let $N$ be a prime with $N \equiv 1 \pmod 3$ and $\gcd(N,3d)=1$. Assume that $d^{\frac{N-1}{3}} \not\equiv 1 \pmod N.$ Then $N$ is a Cubic-SPRP to every base $z \in \mathcal{R}_N^\times$.
\end{theorem}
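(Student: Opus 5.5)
The plan is to mimic the classical proof that primes are strong probable primes, replacing the chain of square roots by a chain of $p$-th roots inside the finite field $\mathcal{R}_N$. Since $N$ is prime and satisfies the hypotheses of Lemma~\ref{lem:ring_structure}, we have $\mathcal{R}_N \cong \mathbb{F}_{N^3}$, which is a field. By Theorem~\ref{thm:cubic_fermat}, for any base $z \in \mathcal{R}_N^\times$ the element $w \equiv z^{N-1}$ satisfies $w^{N^2+N+1} \equiv 1 \pmod N$. Writing $N^2+N+1 = kp^\ell$ as in the definition of a Cubic-SPRP, this reads $w^{kp^\ell} \equiv 1$.

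Next, consider the sequence $X_j = w^{kp^j}$ for $0 \le j \le \ell$. Each term is the $p$-th power of the previous one, $X_{j+1} = X_j^{p}$, and the last term satisfies $X_\ell = 1$. If $X_0 = w^k \equiv 1$, condition (i) holds and we are done. Otherwise let $j$ be the largest index with $0 \le j < \ell$ and $X_j \not\equiv 1$. Such a $j$ exists because $X_0 \ne 1$ and $X_\ell = 1$. By maximality, $X_{j+1} = X_j^p \equiv 1$.

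Now apply the factorization $X^p - 1 = (X-1)\Phi_p(X)$ at $X = X_j$ in the field $\mathcal{R}_N$. Since $X_j - 1 \neq 0$ and a field has no zero divisors, we get $\Phi_p(X_j) = \Phi_p(w^{kp^j}) \equiv 0 \pmod N$. This is exactly condition (ii).

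There is no real obstacle here. The only point needing care is that the zero-divisor argument requires $\mathcal{R}_N$ to be a field, which is why the hypotheses $d^{(N-1)/3}\not\equiv 1$ and $\gcd(N,3d)=1$ are invoked via Lemma~\ref{lem:ring_structure}. One should also note that $d$ cubefree (and not a cube) makes $K$ a genuine cubic field. The argument works for any prime $p$ and any exponent $\ell \ge 1$; no condition such as $p \ne N$ is needed, since only the identity $X^p-1=(X-1)\Phi_p(X)$ is used.
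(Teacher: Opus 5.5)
Your proof is correct and is essentially the paper's argument. Both use Lemma~\ref{lem:ring_structure} to make $\mathcal{R}_N$ a field, obtain $w^{kp^\ell}=1$, and locate the transition in the chain $w^{k}, w^{kp},\dots,w^{kp^\ell}$. Your largest $j$ with $w^{kp^j}\neq 1$ is exactly one less than the paper's smallest $j$ with $w^{kp^j}=1$, because once a term equals $1$ all later terms do. The only cosmetic difference is at the last step: you get $\Phi_p(w^{kp^j})=0$ directly from $X^p-1=(X-1)\Phi_p(X)$ and the absence of zero divisors, while the paper says $X$ has order exactly $p$ and is therefore a primitive $p$-th root of unity, which rests on the same identity.
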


\begin{proof}
By Lemma~\ref{lem:ring_structure} (ii), the ideal $N\mathcal{O}_K$ is prime, and hence $\mathcal{R}_N=\mathcal{O}_K/N\mathcal{O}_K
\cong \mathbb{F}_{N^3}.$ In particular, $\mathcal{R}_N^\times$ is a cyclic group of order $N^3-1.$ Let $z\in\mathcal{R}_N^\times$ and define $w\equiv z^{N-1}\pmod N.$ Since $z^{N^3-1}=1$, it follows that
$w^{N^2+N+1}=1.$ Write $N^2+N+1=kp^\ell.$ Consider the sequence
\[
w^k,\; w^{kp},\; w^{kp^2},\; \dots,\; w^{kp^\ell}.
\]
Since $w^{kp^\ell}=1$, there exists a smallest integer $j$ with $0\le j\le \ell$ such that $w^{kp^j}=1.$ If $j=0$, then $w^k=1$, and condition~(i) of the definition holds. Suppose now that $j>0$. Set $X=w^{kp^{j-1}}.$ Then $X\neq 1$ and $X^p=w^{kp^j}=1.$ By the minimality of $j$, the element $X$ has order exactly $p$. Therefore $X$ is a primitive $p$-th root of unity and satisfies $\Phi_p(X)=0.$ Hence
\[
\Phi_p\!\left(w^{kp^{j-1}}\right)\equiv 0 \pmod N,
\]
so condition~(ii) holds. Therefore, in either case, $N$ satisfies the Cubic-SPRP conditions for the base $z$. Since $z$ was arbitrary, $N$ is a Cubic-SPRP to every base $z\in\mathcal{R}_N^\times$.
\end{proof}

\section{Computational Results}\label{sec:8}
 We worked with the monogenic pure cubic field $K=\mathbb{Q}(\sqrt[3]{2})$, taking $d=2$. The construction was initialized using seed primes $q\equiv 1 \pmod 3$. For a prescribed bit-length, candidate integers $N$ were generated from solutions of the congruence $S^2\equiv -3 \pmod q$. To improve efficiency, each candidate was first tested using the Cubic-SPRP filter to remove most composite numbers. The remaining candidates were then checked using the Constructive Cubic Primality Certificate.\\
Table~\ref{tab:computational_results} presents the prime pairs generated by the algorithm together with their corresponding verification times.

\renewcommand{\arraystretch}{1.3}
\begin{longtable}{|c|p{6cm}|p{5cm}|c|}
\caption{Execution times and explicitly generated parameters for the Constructive Cubic Prime Generation Chain.}
\label{tab:computational_results} \\
\hline
\textbf{Bits} & \textbf{Trusted Seed Prime ($q$)} & \textbf{Constructed Prime ($N$)} & \textbf{Time (ms)} \\
\hline
\endfirsthead

\multicolumn{4}{c}%
{{\bfseries \tablename\ \thetable{} -- continued from previous page}} \\
\hline
\textbf{Bits} & \textbf{Trusted Seed Prime ($q$)} & \textbf{Constructed Prime ($N$)} & \textbf{Time (ms)} \\
\hline
\endhead

\hline \multicolumn{4}{r}{{Continued on next page}} \\ 
\endfoot

\hline
\endlastfoot

64 & \texttt{106604454681769750124379\newline 31337857311901} & \texttt{14962264361777480479} & 22.26 \\
\hline
128 & \texttt{703348757708386807414030\newline 178553266944339341840854\newline 038255229248948298177767\newline 0573} & \texttt{38432179110578836596\newline 0434389112751695311} & 42.14 \\
\hline
192 & \texttt{336679410404601368881386\newline 177544974650970232730904\newline 745306578218784131196138\newline 693989365612416861989186\newline 2834134564751320781} & \texttt{84084883412517310491\newline 70026360825540596083\newline 668450087019223599} & 184.67 \\
\hline
256 & \texttt{138479772433096188112799\newline 098809958066806417720179\newline 839336402327890268497587\newline 803576020987455099034023\newline 692973267018201358134998\newline 134275604038344351508407\newline 6318821511} & \texttt{17053079549146013264\newline 68769138407093647897\newline 51542503718589462869\newline 683657599587157169} & 157.57 \\
\end{longtable}
The data support the theoretical results. From Table~\ref{tab:computational_results}, we see that the generated values satisfy $q \approx N^2$, as expected from the construction. We also observe that cryptographic-size primes are verified in only a few milliseconds. As the bit-length increases, the verification time grows slowly, as expected from the complexity estimate $\tilde{\mathcal{O}}(\log^2 N)$.

\section{Primility for Pure Prime Degree Number Fields}\label{sec:9}

Let $p$ and $q$ be distinct prime numbers such that $q \equiv 1 \pmod p$. In this  section, we show that all the results developed for the cubic setting can be extended similarly to test integers $N$ of the form $N^{p-1}+N^{p-2}+\dots+N+1=kq$ for some $k\in\mathbb{Z}_{>0}$. To achieve this, we consider pure monogenic number fields of prime degree $K=\mathbb{Q}(\sqrt[p]{d})$, where $d$ is a $p$-th power free integer such that $d^{\frac{N-1}{p}} \not\equiv 1 \pmod N$. Conditions ensuring the monogenity of $K$ may be found in \cite{jhorar2016integrally}.

The following propositions will be used in the development of our extended primality framework and the construction of candidate primes $N$ from a given seed prime $q$.

\begin{proposition} \label{thm:gen_root}
Let $p$ and $q$ be distinct prime numbers. There exists an integer $N$ such that $\Phi_p(N) \equiv 0 \pmod q$ if and only if $q \equiv 1 \pmod p$, where $\Phi_p(N) = N^{p-1} + N^{p-2} + \dots + 1$ is the $p$-th cyclotomic polynomial.
\end{proposition}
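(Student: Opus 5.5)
The plan is to reduce the statement to a fact about multiplicative orders in the cyclic group $\mathbb{F}_q^\times$, using the identity $(N-1)\Phi_p(N) = N^p - 1$.

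\textbf{Forward direction.} Suppose $q \mid \Phi_p(N)$. Then $N^p \equiv 1 \pmod q$, so $q \nmid N$ and the order of $N$ in $\mathbb{F}_q^\times$ divides $p$. Since $p$ is prime, that order is either $1$ or $p$.

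The main obstacle is ruling out order $1$, i.e. $N \equiv 1 \pmod q$. In that case $\Phi_p(N) \equiv \Phi_p(1) = p \pmod q$, so $q \mid p$. This forces $q = p$, contradicting the assumption that $p$ and $q$ are distinct. Hence $N$ has order exactly $p$ in $\mathbb{F}_q^\times$. By Lagrange's theorem, $p \mid q-1$, that is, $q \equiv 1 \pmod p$.

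\textbf{Converse.} Suppose $q \equiv 1 \pmod p$. Since $\mathbb{F}_q^\times$ is cyclic of order $q-1$, it contains an element of order $p$. Concretely, take a generator $g$ and set $N \equiv g^{(q-1)/p} \pmod q$. Then $N^p \equiv 1$ and $N \not\equiv 1 \pmod q$. From
\[
(N-1)\Phi_p(N) \equiv 0 \pmod q
\]
and the fact that $N - 1$ is invertible modulo $q$, we conclude $\Phi_p(N) \equiv 0 \pmod q$. Any integer lift of this residue gives the required $N$.

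As a consistency check, for $p=3$ this recovers the quadratic-reciprocity criterion from Section~\ref{sec:3}: $-3$ is a square modulo $q$ exactly when $q \equiv 1 \pmod 3$.
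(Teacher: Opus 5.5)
Your proof is correct and follows essentially the same route as the paper's: you use the identity $(N-1)\Phi_p(N)=N^p-1$, show the order of $N$ modulo $q$ is $1$ or $p$ and exclude $1$ via $\Phi_p(1)=p$, and for the converse take an element of order $p$ in the cyclic group $(\mathbb{Z}/q\mathbb{Z})^\times$. The differences are cosmetic: you deduce $q\nmid N$ from $N^p\equiv 1 \pmod q$ rather than by evaluating $\Phi_p(0)\equiv 1 \pmod q$ as the paper does, and you write the element of order $p$ explicitly as $g^{(q-1)/p}$.
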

\begin{proof}
Assume first that there exists an integer \(N\) such that $\Phi_p(N)\equiv 0 \pmod q.$ We first show that \(N\not\equiv 0\pmod q\). Indeed, if \(N\equiv 0\pmod q\), then
\[
\Phi_p(N)
=
N^{p-1}+N^{p-2}+\cdots+N+1
\equiv 1 \pmod q.
\]
This contradicts the assumption. Hence, \(N\in (\mathbb Z/q\mathbb Z)^\times\). Using the identity $N^p-1=(N-1)\Phi_p(N),$ we obtain \[N^p\equiv 1 \pmod q.\] Let \(\operatorname{ord}_q(N)\) denote the multiplicative order of \(N\) modulo \(q\). Since \(N^p\equiv 1\pmod q\), we have  $\operatorname{ord}_q(N)\mid p.$ As \(p\) is prime, $\operatorname{ord}_q(N)\in\{1,p\}.$ Suppose that $\operatorname{ord}_q(N)=1.$ Then $N\equiv 1\pmod q.$
Therefore,
\[
0
\equiv
\Phi_p(N)
\equiv
\Phi_p(1)
=
p
\pmod q.
\]
This implies  \(q\mid p\), which is not possible because \(p\) and \(q\) are distinct primes. Hence $\operatorname{ord}_q(N)=p.$ Since the order of an element divides the order of the group \((\mathbb Z/q\mathbb Z)^\times\), which is \(q-1\), we obtain $p\mid (q-1).$ Therefore, $q\equiv 1\pmod p.$

Conversely, assume that  $q\equiv 1\pmod p.$ Then $p$ divides $q-1$. Since \((\mathbb Z/q\mathbb Z)^\times\) is a cyclic group of order \(q-1\), there exists an element \(N\) of order \(p\). Hence $N^p\equiv 1\pmod q$ and $N\not\equiv 1\pmod q.$ Using the factorization
\[
N^p-1=(N-1)\Phi_p(N),
\]
we obtain
\[
(N-1)\Phi_p(N)\equiv 0\pmod q.
\]
Since \(N\not\equiv 1\pmod q\), the element \(N-1\) is nonzero modulo \(q\). As \(\mathbb Z/q\mathbb Z\) is a field, it follows that
\[
\Phi_p(N)\equiv 0\pmod q.
\]
This completes the proof.
\end{proof}
The following proposition is a well-known form of Euler's criterion for $p$-th power residues and will be used in the proof of the primality results.
\begin{proposition}\label{gen Euler's Criterion}
Let $N$ and $p$ be a primes such that $N \equiv 1 \pmod p$ and  $a$ be an integer such that $\gcd(a, N) = 1$. Then $a$ is a $p$-th power residue modulo $N$ if and only if $a^{\frac{N-1}{p}} \equiv 1 \pmod N$. 
\end{proposition}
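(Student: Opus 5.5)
The plan is to use the cyclic structure of $(\mathbb{Z}/N\mathbb{Z})^\times$ for a prime $N$, exactly as in the second half of the proof of Proposition~\ref{thm:gen_root}. Since $N$ is prime, $(\mathbb{Z}/N\mathbb{Z})^\times$ is cyclic of order $N-1$. Fix a generator $g$, and write $a \equiv g^{t} \pmod N$ for some integer $t$ with $0\le t<N-1$; this is possible because $\gcd(a,N)=1$. The hypothesis $N\equiv 1 \pmod p$ guarantees that $m:=\frac{N-1}{p}$ is an integer. The whole proof then reduces to showing that both conditions in the statement are equivalent to $p \mid t$.

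\emph{First equivalence: $p$-th power residues.} Suppose $a\equiv x^p \pmod N$ for some $x=g^s$. Then $g^{t}=g^{ps}$, so $t\equiv ps \pmod{N-1}$. Because $p\mid N-1$, this congruence forces $p\mid t$. Conversely, if $t=pu$, then $a\equiv (g^{u})^p$, so $a$ is a $p$-th power residue.

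\emph{Second equivalence: the Euler-type exponent.} We compute $a^{m}\equiv g^{tm} \pmod N$. Since $g$ has exact order $N-1=pm$, we get $g^{tm}\equiv 1$ if and only if $pm \mid tm$, that is, if and only if $p\mid t$. Combining the two equivalences gives the proposition.

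The only delicate point is the step $t\equiv ps \pmod{N-1}\Rightarrow p\mid t$, and it is exactly where $p \mid N-1$ is used. This is straightforward once written down, so I expect no genuine obstacle. For an argument that avoids generators, one could instead use the counting approach. The $p$-th power map on the cyclic group $(\mathbb{Z}/N\mathbb{Z})^\times$ has kernel of size $p$, so its image has size $m$. Every $p$-th power $x^p$ satisfies $(x^p)^m=x^{N-1}=1$. The polynomial $X^m-1$ has at most $m$ roots in the field $\mathbb{F}_N$, so the image is exactly the set of roots of $X^m-1$.
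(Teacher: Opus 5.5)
Your proof is correct and follows essentially the same route as the paper. Both fix a primitive root $g$, write $a\equiv g^{t}$, and reduce the key direction to $(N-1)\mid t\,\frac{N-1}{p}$, i.e.\ $p\mid t$; the only difference is that the paper proves the forward direction directly by Fermat's little theorem ($a^{(N-1)/p}\equiv x^{N-1}\equiv 1 \pmod N$) rather than through the exponent $t$.
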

\begin{proof}
First, assume that $a$ is a $p$-th power residue modulo $N$. Then we show that $a^{\frac{N-1}{p}} \equiv 1 \pmod N$. As $a$ is a $p$-th power residue modulo $N$, there exists an integer $x$ such that
\begin{equation}\label{eqn:9.1}
    x^p\equiv a \pmod N.
\end{equation}
Since $\gcd(a,\,N)=1$, hence $\gcd(x,\,N)=1$. Keeping in mind $N\equiv1\pmod p$, taking the power $\frac{N-1}{p}$ on both sides in \eqref{eqn:9.1}, we have $x^{\,p\,\frac{(N-1)}{p}}=x^{N-1}\equiv a^{\frac{N-1}{p}}\pmod N.$ By Fermat's theorem, $x^{N-1}\equiv1\pmod N$. This implies $a^{\frac{N-1}{p}}\equiv 1 \pmod N $.

Conversely, suppose that  $a^{\frac{N-1}{p}}\equiv 1 \pmod N.$ Let $g$ be a primitive root modulo $N$. Since $\gcd(a,N)=1$, there exists an integer $k$ such that
\[
a\equiv g^k \pmod N.
\]
Therefore,
\[
(g^k)^{\frac{(N-1)}{p}}\equiv a^{\frac{N-1}{p}}
      \equiv 1
      \pmod N.
\]
Since $g$ has order $N-1$, it follows that
\[
(N-1) \mid \frac{k(N-1)}{p}.
\]
Hence $p\mid k.$ Thus $k=mp$ for some integer $m$. Substituting into
$a\equiv g^k \pmod N$, we obtain
\[
a\equiv g^{mp}=(g^m)^p \pmod N.
\]
Let $x=g^m$. Then
\[
x^p\equiv a \pmod N.
\]
Therefore $a$ is a $p$-th power residue modulo $N$. This completes the proof of proposition.
\end{proof}
To understand the Proposition~\ref{gen Euler's Criterion}, we provide the following example.
\begin{example}
 Consider  $N=7$ and $p=3$. All conditions of Proposition~\ref{gen Euler's Criterion} satisfy, hence  we can apply the proposition to test if $a = 2$ is a cubic residue or not. Applying the formula, we compute:
\begin{equation*}
a^{\frac{N-1}{p}} \equiv 2^{\frac{7-1}{3}} \equiv 2^2 \equiv 4 \pmod 7.
\end{equation*}
Since  $4 \not\equiv 1 \pmod 7$, by Proposition~\ref{gen Euler's Criterion} implies that $2$ is not a cubic residue modulo $7$. We can verify this manually by computing the first few cubes modulo $7$:
\begin{align*}
1^3 &\equiv 1 \pmod 7, \\
2^3 &\equiv 1 \pmod 7, \\
3^3 &\equiv 6 \pmod 7.
\end{align*}
The only cubic residues modulo $7$ are $1$ and $6$. Hence $2$ is not a cubic residue modulo $7$.
\end{example}
\begin{proposition}\label{lem:irreducible_binomial}
Let $p$ be a prime and let $F$ be a field containing a primitive
$p$-th root of unity $\zeta$. For $d\in F$, the polynomial
$x^p-d$ is irreducible over $F$ if and only if $d$ is not a
$p$-th power in $F$.
\end{proposition}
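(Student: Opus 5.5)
We prove the two directions separately; the substantive one is the converse. If $d=c^p$ for some $c\in F$, then $x-c$ divides $x^p-d$, so $x^p-d$ is reducible as soon as $p\ge 2$. Conversely, suppose $d$ is not a $p$-th power in $F$; we must show $x^p-d$ is irreducible. If $d=0$ then $d=0^p$, so $d\neq 0$. Since $F$ contains a primitive $p$-th root of unity, its characteristic is not $p$ (in characteristic $p$ we have $x^p-1=(x-1)^p$, so no primitive $p$-th root exists). We do not use separability below.

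Work in a splitting field $E$ and fix a root $\alpha$ of $x^p-d$. Since $d\neq 0$, $\alpha\neq 0$, and the roots are exactly $\zeta^i\alpha$ for $0\le i<p$, because $(\zeta^i\alpha)^p=d$ and these are $p$ distinct elements. So
\[
x^p-d=\prod_{i=0}^{p-1}(x-\zeta^i\alpha)\quad\text{in } E[x].
\]
Suppose $x^p-d=g(x)h(x)$ with $g\in F[x]$ monic of degree $m$, $1\le m\le p-1$. Then $g$ is a product of $m$ of these linear factors, so its constant term is $\pm\zeta^{s}\alpha^{m}$ for some integer $s$. As $\zeta\in F$, it follows that $\beta:=\alpha^m\in F$.

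This last step is where the primality of $p$ enters. Since $\gcd(m,p)=1$, choose integers $u,v$ with $um+vp=1$. Then
\[
\alpha=\alpha^{um+vp}=\beta^{u}d^{v}\in F,
\]
where negative exponents are allowed because $\alpha\neq 0$ and $d\neq 0$. Hence $d=\alpha^p$ is a $p$-th power in $F$, a contradiction. Therefore $x^p-d$ is irreducible over $F$.
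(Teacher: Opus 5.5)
Your proof is correct and matches the paper's argument step for step: the roots are $\zeta^i\alpha$, the constant term of a proper monic factor of degree $m$ forces $\alpha^m\in F$, and a B\'ezout relation $um+vp=1$ then gives $\alpha\in F$. Your additional remarks (that $d\neq 0$, that the characteristic is not $p$ so the roots are distinct, and that negative exponents are legitimate) make explicit points the paper leaves implicit.
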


\begin{proof}
If $d=a^p$ for some $a\in F$, then
\[
x^p-d=x^p-a^p=(x-a)(x^{p-1}+ax^{p-2}+\cdots+a^{p-1}).
\]
This implies $x^p-d$ is reducible.

Conversely, assume that $d$ is not a $p$-th power in $F$ and let
$\alpha$ be a root of $x^p-d$. Since $F$ contains a primitive
$p$-th root of unity $\zeta$, the roots of $x^p-d$ are
\[
\alpha,\ \zeta\alpha,\ \zeta^2\alpha,\ \dots,\ \zeta^{p-1}\alpha.
\]
Suppose that $x^p-d$ is reducible over $F$. Then it has a proper
monic factor $g(x)\in F[x]$ of degree $k$, where $1\le k<p$.
The roots of $g(x)$ form a subset of the above roots, and hence
the constant term of $g(x)$ is of the form
\[
(-1)^k\zeta^m\alpha^k
\]
for some integer $m$. Since $g(x)\in F[x]$ and $\zeta\in F$, it
follows that $\alpha^k\in F$. Since $\alpha^p=d\in F$ and $\gcd(k,p)=1$, there exist integers
$s,t$ such that
\[
ps+kt=1.
\]
Therefore,
\[
\alpha=\alpha^{ps+kt}
=(\alpha^p)^s(\alpha^k)^t
=d^s(\alpha^k)^t\in F.
\]
Hence $d=\alpha^p$ is a $p$-th power in $F$, contradicting our
assumption. Therefore $x^p-d$ is irreducible over $F$.
\end{proof}
We now define Generalized Unitary Group of degree $p$ as follows:
\begin{definition}
Assume that $\mathcal{R}_N \cong \mathbb{F}_{N^p}$. We define the relative norm map $\mathcal{N} : \mathcal{R}_N^\times \to \mathbb{F}_N^\times$ by $\mathcal{N}(w) = w^{\Phi_p(N)}$. The generalized unitary group of degree $p$ is defined as:
$$ \mathcal{G}_N(d) := \{w \in \mathcal{R}_N^\times \mid w^{\Phi_p(N)} \equiv 1 \pmod N\}. $$
\end{definition}
\begin{lemma} \label{lem:gen_inertia}
Let $N$ be a prime such that $N \equiv 1 \pmod p$ and $\gcd(N, pd) = 1$. Suppose that $d^{\frac{N-1}{p}} \not\equiv 1 \pmod N$. Then:
\begin{enumerate}
    \item[\textup{(i)}] The polynomial $x^p - d$ is irreducible over $\mathbb{F}_N$.
    \item[\textup{(ii)}] The ideal $N\mathcal{O}_K$ is prime and $\mathcal{R}_N \cong \mathbb{F}_{N^p}$.
    \item[\textup{(iii)}] The group $\mathcal{G}_N(d)$ is cyclic of order $\Phi_p(N)$.
\end{enumerate}
\end{lemma}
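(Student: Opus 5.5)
The plan mirrors the proof of Lemma~\ref{lem:ring_structure}, with the cubic-specific inputs replaced by the degree-$p$ propositions just established.

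\textbf{Part (i).} Since $N \equiv 1 \pmod p$, the cyclic group $\mathbb{F}_N^\times$ of order $N-1$ contains an element of order $p$. This is a primitive $p$-th root of unity $\zeta \in \mathbb{F}_N$, so Proposition~\ref{lem:irreducible_binomial} applies with $F=\mathbb{F}_N$. It says $x^p-d$ is irreducible over $\mathbb{F}_N$ exactly when $d$ is not a $p$-th power in $\mathbb{F}_N$. Because $\gcd(N,d)=1$, Proposition~\ref{gen Euler's Criterion} says $d$ is a $p$-th power residue modulo $N$ if and only if $d^{\frac{N-1}{p}}\equiv 1 \pmod N$. This congruence is excluded by hypothesis, so $x^p-d$ is irreducible.

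\textbf{Part (ii).} The discriminant of $f(x)=x^p-d$ is $\pm p^p d^{p-1}$, which one gets from $\operatorname{disc}(f)=\pm\prod_i f'(\theta_i)=\pm\prod_i p\,\theta_i^{p-1}$ and $\prod_i\theta_i=\pm d$. The hypothesis $\gcd(N,pd)=1$ then gives $N\nmid \D_f$. By the index relation \eqref{formula}, $\D_f=[\mathcal{O}_K:\mathbb{Z}[\theta]]^2 d_K$, so $N$ does not divide the index. The Kummer--Dedekind theorem (Theorem~\ref{thm:kummer_dedekind}) then transfers the irreducibility of $f$ modulo $N$ from (i) to primality of $N\mathcal{O}_K$. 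The residue field $\mathcal{O}_K/N\mathcal{O}_K$ has degree $\deg f = p$ over $\mathbb{F}_N$, so $\mathcal{R}_N\cong\mathbb{F}_{N^p}$.

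This step needs one small point made explicit: Theorem~\ref{thm:kummer_dedekind} refers to the minimal polynomial of $\theta$, so $f$ must be irreducible over $\mathbb{Q}$. Irreducibility modulo $N$ already gives this, since any factorization over $\mathbb{Z}$ of a monic polynomial would reduce to one modulo $N$.

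\textbf{Part (iii).} Identify $\mathcal{R}_N^\times$ with $\mathbb{F}_{N^p}^\times$, which is cyclic of order $N^p-1$. The norm map $w\mapsto w^{1+N+\dots+N^{p-1}}=w^{\Phi_p(N)}$ is the product of the Galois conjugates $w^{N^i}$, so its values are fixed by Frobenius and lie in $\mathbb{F}_N^\times$.

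Surjectivity is the only step that needs an argument rather than a citation. Take a generator $g$ of $\mathbb{F}_{N^p}^\times$. Its image $g^{\Phi_p(N)}$ has order exactly $(N^p-1)/\Phi_p(N)=N-1$, so it generates $\mathbb{F}_N^\times$.

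The First Isomorphism Theorem then gives $|\ker\mathcal{N}|=(N^p-1)/(N-1)=\Phi_p(N)$. By definition $\ker\mathcal{N}=\mathcal{G}_N(d)$, and as a subgroup of a cyclic group it is cyclic. Equivalently, $\mathcal{G}_N(d)$ is the unique subgroup of order $\Phi_p(N)$ in the cyclic group $\mathbb{F}_{N^p}^\times$, since $\Phi_p(N)\mid N^p-1$.
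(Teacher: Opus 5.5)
Your proof is correct and takes the same route as the paper, which only says that the argument of Lemma~\ref{lem:ring_structure} carries over using Kummer--Dedekind, the degree-$p$ Euler criterion, the binomial irreducibility proposition, and the discriminant of $x^p-d$. The details you add that the paper leaves implicit are sound: the explicit discriminant $\pm p^p d^{p-1}$, the fact that irreducibility modulo $N$ already forces $x^p-d$ to be irreducible over $\mathbb{Q}$, and surjectivity of the norm via a generator of $\mathbb{F}_{N^p}^\times$.
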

\begin{proof}
The proof follows the same argument as that of Lemma~\ref{lem:ring_structure}. We use the Kummer--Dedekind theorem for extensions of degree $p$, together with Propositions~\ref{gen Euler's Criterion} and~\ref{lem:irreducible_binomial}, and the discriminant formula for pure number fields of prime degree.
\end{proof}

\begin{theorem} \label{thm:gen_fermat}
Let $N \equiv 1 \pmod p$ be a prime, and let $d$ be a $p$-th power free integer satisfying $\gcd(N, pd) = 1$ and $d^{\frac{N-1}{p}} \not\equiv 1 \pmod N$. Then for every $z \in \mathcal{R}_N^\times$, we have $z^{N^p-1} \equiv 1 \pmod N$. Moreover, if $w \equiv z^{N-1} \pmod N$, then $w^{\Phi_p(N)} \equiv 1 \pmod N$.
\end{theorem}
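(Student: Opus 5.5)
The plan mirrors the proof of Theorem~\ref{thm:cubic_fermat}, with Lemma~\ref{lem:gen_inertia} taking the place of Lemma~\ref{lem:ring_structure}. First, the hypotheses of the statement are exactly those of Lemma~\ref{lem:gen_inertia}: $N$ is prime, $N\equiv 1 \pmod p$, $\gcd(N,pd)=1$, and $d^{\frac{N-1}{p}}\not\equiv 1 \pmod N$. So Lemma~\ref{lem:gen_inertia}(ii) gives that $N\mathcal{O}_K$ is a prime ideal and $\mathcal{R}_N=\mathcal{O}_K/N\mathcal{O}_K\cong \mathbb{F}_{N^p}$.

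For the first claim, I would use that $\mathcal{R}_N^\times\cong\mathbb{F}_{N^p}^\times$ is a finite group of order $N^p-1$. Lagrange's theorem then gives $z^{N^p-1}=1$ in $\mathcal{R}_N$ for every $z\in\mathcal{R}_N^\times$, which is the congruence $z^{N^p-1}\equiv 1 \pmod N$.

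For the second claim, I would use the factorization $N^p-1=(N-1)\Phi_p(N)$, where $\Phi_p(N)=N^{p-1}+\dots+N+1$. With $w\equiv z^{N-1}$ this gives
\[
w^{\Phi_p(N)}\equiv z^{(N-1)\Phi_p(N)}= z^{N^p-1}\equiv 1 \pmod N.
\]
In particular, $w$ lies in the kernel of the relative norm $\mathcal{N}$, that is, $w\in\mathcal{G}_N(d)$.

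The only point needing care, and the main obstacle, is the identification $\mathcal{R}_N\cong\mathbb{F}_{N^p}$ in Lemma~\ref{lem:gen_inertia}, since the paper proves it only by analogy with the cubic case. If that step is to be justified here, I would argue as follows. Proposition~\ref{gen Euler's Criterion} shows that $d$ is not a $p$-th power modulo $N$. Since $N\equiv 1\pmod p$, the field $\mathbb{F}_N$ contains a primitive $p$-th root of unity, so Proposition~\ref{lem:irreducible_binomial} shows that $x^p-d$ is irreducible over $\mathbb{F}_N$. The discriminant of $x^p-d$ is $\pm p^p d^{p-1}$, which is coprime to $N$. By \eqref{formula}, $N$ therefore does not divide the index $[\mathcal{O}_K:\mathbb{Z}[\theta]]$, and Theorem~\ref{thm:kummer_dedekind} shows that $N\mathcal{O}_K$ is prime with residue degree $p$.

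Once this is in place, the rest of the argument is the one-line exponent computation above.
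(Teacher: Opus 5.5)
Your proposal is correct and follows the paper's own route: the paper also reduces to Theorem~\ref{thm:cubic_fermat}, identifying $\mathcal{R}_N\cong\mathbb{F}_{N^p}$ via Lemma~\ref{lem:gen_inertia}, applying Lagrange's theorem in a group of order $N^p-1$, and using $N^p-1=(N-1)\Phi_p(N)$. Your extra justification of Lemma~\ref{lem:gen_inertia}(ii) fills in the paper's brief sketch of that lemma. It combines Euler's criterion, the irreducibility criterion for $x^p-d$, the discriminant $\pm p^p d^{p-1}$ being prime to $N$, and Kummer--Dedekind.
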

\begin{proof}
The proof follows the same argument as in Theorem~\ref{thm:cubic_fermat}, using the fact that the multiplicative group of the finite field $\mathbb{F}_{N^p}$ has order $N^p - 1$.
\end{proof}

\begin{theorem}
Let $q > p$ be a prime and $d$ be an integer. Let $N \equiv 1 \pmod p$ be constructed such that $\Phi_p(N) = kq$ for some cofactor $k < \sqrt{N}$. Assume $\gcd(N, pd) = 1$ and $d^{\frac{N-1}{p}} \not\equiv 1 \pmod N$. Suppose there exists $w \in \mathcal{G}_N(d)$ such that $w^k \not\equiv 1 \pmod N$. Then $N$ is prime if and only if $\Phi_q(w^k) \equiv 0 \pmod N$.
\end{theorem}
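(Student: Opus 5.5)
The plan follows the proofs of Theorem~\ref{thm:cubic_iff} and Theorem~\ref{thm:constructive_cubic}, with $3$ replaced by $p$ and $N^2+N+1$ replaced by $\Phi_p(N)$. Throughout, set $X=w^k\in\mathcal{R}_N$.

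\emph{Forward direction.} Assume $N$ is prime. The hypotheses $N\equiv 1\pmod p$, $\gcd(N,pd)=1$ and $d^{\frac{N-1}{p}}\not\equiv 1\pmod N$ are those of Lemma~\ref{lem:gen_inertia}, so $\mathcal{R}_N\cong\mathbb{F}_{N^p}$ is a field. Since $w\in\mathcal{G}_N(d)$, we have $X^q=w^{kq}=w^{\Phi_p(N)}=1$. The identity $X^q-1=(X-1)\Phi_q(X)$ then gives $(X-1)\Phi_q(X)=0$ in $\mathcal{R}_N$. We have $X\neq 1$ by hypothesis, and a field has no zero divisors, so $\Phi_q(X)\equiv 0\pmod N$.

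\emph{Converse.} Assume $\Phi_q(X)\equiv 0\pmod N$ and, for contradiction, that $N$ is composite. Let $r$ be its smallest prime divisor, so $r\le\sqrt N$. Choose a prime ideal $\mathfrak r$ of $\mathcal{O}_K$ above $r$. As in the proof of Theorem~\ref{thm:constructive_cubic}, this makes $\mathcal{O}_K/\mathfrak r$ a finite field of size $r^f$ with $f\le p$, whatever the splitting of $x^p-d$ modulo $r$.
\begin{enumerate}
\item Reducing modulo $\mathfrak r$ gives $\Phi_q(X)\equiv 0$, hence $X^q\equiv 1\pmod{\mathfrak r}$.
\item If $X\equiv 1\pmod{\mathfrak r}$, then $q=\Phi_q(1)\equiv 0\pmod{\mathfrak r}$, so $r=q$. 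Then $q\mid N$. But $\Phi_p(N)\equiv 1\pmod N$, so $q\mid\Phi_p(N)$ forces $q\mid 1$, which is impossible.
\item Hence $X$ has order exactly $q$ in $(\mathcal{O}_K/\mathfrak r)^\times$. Lagrange's theorem gives
\[
q\le r^f-1\le r^p-1\le N^{p/2}-1 .
\]
\end{enumerate}

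\emph{Main obstacle: the final inequality.} From $\Phi_p(N)=kq$ and $k<\sqrt N$ we get
\[
q=\frac{\Phi_p(N)}{k}>\frac{N^{p-1}}{N^{1/2}}=N^{p-\frac32}.
\]
Since $N\ge 2$, we have $N^{p-3/2}\ge N^{p/2}>N^{p/2}-1$ exactly when $p-\tfrac32\ge\tfrac p2$, that is, when $p\ge 3$. This contradicts step 3, so $N$ is prime.

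This is the one place where the degree genuinely matters. For $p=2$ the argument breaks down: then $\Phi_2(N)=N+1$, and the bound $k<\sqrt N$ only gives $q>\sqrt N$, which does not contradict $q\le N-1$. So I would either state explicitly that $p$ is an odd prime, which is the setting of this section, where $p=3$ recovers Theorem~\ref{thm:cubic_iff}, or else replace the hypothesis on $k$ by the direct bound $q>N^{p/2}-1$, as in the certificate of Theorem~\ref{thm:constructive_cubic}.

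Two smaller points need checking. The converse never uses primality of $N\mathcal{O}_K$ or the hypothesis on $d^{\frac{N-1}{p}}$, since working with a prime ideal $\mathfrak r$ above $r$ avoids any factorization analysis. And passing from a congruence modulo $N$ to one modulo $\mathfrak r$ is valid because $N\mathcal{O}_K\subseteq r\mathcal{O}_K\subseteq\mathfrak r$.
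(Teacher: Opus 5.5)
Your proposal is correct and is essentially the paper's proof, which simply carries the argument of Theorem~\ref{thm:cubic_iff} over to degree $p$. Working with a prime ideal $\mathfrak r$ above $r$, as in Theorem~\ref{thm:constructive_cubic}, is a harmless variant, and you are right that the closing inequality needs $p\ge 3$, a restriction the paper's ``completely analogous'' proof leaves implicit. You need not restrict the statement for $p=2$, however: there $q$ and $r$ are both odd, so $q\mid r^2-1$ forces $2q\mid r\pm 1$, hence $r\ge 2q-1>q$, which contradicts $r\le\sqrt N<(N+1)/k=q$.
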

\begin{proof}
The proof is completely analogous to that of Theorem~\ref{thm:cubic_iff}.
\end{proof}
\begin{definition}
Let $p$ be a prime and $N \equiv 1 \pmod p$. We define an integer $d$ to be a $p$-th power non-residue base for $N$ if $d$ is a $p$-th power free integer, $\gcd(N, pd) = 1$, and $d^{\frac{N-1}{p}} \not\equiv 1 \pmod N$.
\end{definition}

\subsection*{Certificate Structure}

Let $N \equiv 1 \pmod{p}$ be an integer constructed from a known prime $q \equiv 1 \pmod{p}$ such that $\phi_p(N) = k q.$ We define the data required to mathematically certify that $N$ is prime. A constructive prime certificate for $N$ consists of the following tuple:
\begin{itemize}
    \item a verified prime $q$,
    \item let $d$ be a $p$-th power non-residue base for $N$,
    \item an element $w \in \mathcal{R}_N^\times$.
\end{itemize}

To verify the certificate, it is sufficient to check that:
\begin{equation*}
   \Phi_q(w^k) \equiv 0 \pmod N \quad \text{and} \quad q > N^{p/2} - 1.
\end{equation*}

\begin{theorem}
\label{thm:constructive_prime}
If a constructed integer $N$ possesses a valid prime certificate as structured above, then $N$ is  prime.
\end{theorem}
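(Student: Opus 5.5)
We will argue by contradiction, following the proof of Theorem~\ref{thm:constructive_cubic} almost line by line with $3$ replaced by $p$. Suppose $N$ is composite and let $r$ be its smallest prime divisor, so that $r \le \sqrt{N}$. Choose a prime ideal $\mathfrak{r}$ of $\mathcal{O}_K$ lying above $r$, where $K=\mathbb{Q}(\sqrt[p]{d})$. The residue field $\mathbb{F}_{\mathfrak{r}}=\mathcal{O}_K/\mathfrak{r}$ has $r^f$ elements with inertial degree $f\le [K:\mathbb{Q}]=p$. Hence $|\mathbb{F}_{\mathfrak{r}}^\times| = r^f-1 \le r^p-1$. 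This bound needs no hypothesis on how $r$ factors in $K$, nor on monogenity. We only need that $w$, given as an element of $\mathcal{R}_N$, induces an element of $\mathcal{O}_K/\mathfrak{r}$. This holds because $N\mathcal{O}_K \subseteq r\mathcal{O}_K\subseteq \mathfrak{r}$, so reduction gives a ring map $\mathcal{R}_N \to \mathbb{F}_{\mathfrak{r}}$.

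Next, set $X=w^k$ and reduce the certificate congruence $\Phi_q(X)\equiv 0 \pmod N$ modulo $\mathfrak{r}$. Using $X^q-1=(X-1)\Phi_q(X)$, we get $X^q = 1$ in $\mathbb{F}_{\mathfrak{r}}$. We then rule out $X\equiv 1 \pmod{\mathfrak{r}}$. If it held, then $q=\Phi_q(1)\equiv 0 \pmod{\mathfrak{r}}$, so $q\in \mathfrak{r}\cap\mathbb{Z}=r\mathbb{Z}$ and therefore $r=q$. But $q\mid \Phi_p(N)$ and $\Phi_p(N)\equiv 1 \pmod N$, since every term except the constant $1$ is divisible by $N$. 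So $q\mid N$ would force $q\mid 1$, which is absurd.

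It follows that $X$ has order exactly $q$ in $\mathbb{F}_{\mathfrak{r}}^\times$, and by Lagrange's theorem $q \mid r^f-1$. Thus
\[
q \le r^p - 1 \le N^{p/2}-1,
\]
which contradicts the certificate condition $q > N^{p/2}-1$. Hence $N$ is prime.

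The only step needing care is the $X\not\equiv 1$ case, specifically the observation that $\Phi_p(N)\equiv 1\pmod N$ replaces the cubic identity $(N^2+N+1)-N(N+1)=1$. A second point to check is that reduction modulo $\mathfrak{r}$ is well defined on $\mathcal{R}_N$, which the argument above covers. Everything else is routine. Note that the hypotheses on $d$ (non-residue, $\gcd(N,pd)=1$) are not needed for this direction; they only serve the converse and the efficiency of the arithmetic.
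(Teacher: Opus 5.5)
Your proposal is correct and follows essentially the same route as the paper: reduce modulo a prime ideal $\mathfrak{r}$ above the smallest prime divisor $r \le \sqrt{N}$, show $w^k$ has order exactly $q$ in $\mathbb{F}_{\mathfrak{r}}^\times$, and conclude $q \le r^p - 1 \le N^{p/2}-1$. Your explicit handling of the $X \equiv 1$ case via $\Phi_p(N) \equiv 1 \pmod N$, and your check that $\mathcal{R}_N \to \mathbb{F}_{\mathfrak{r}}$ is well defined, spell out what the paper covers by citing ``the same argument as in Theorem~\ref{thm:constructive_cubic}.''
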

\begin{proof}
Assume for the sake of contradiction that $N$ is composite, having a prime divisor $r \le \sqrt{N}$. Let $\mathfrak{r}$ be a prime ideal lying above $r$ in the ring of algebraic integers of $K = \mathbb{Q}(\sqrt[p]{d})$. Using the same argument as in Theorem~\ref{thm:constructive_cubic}, condition $\Phi_q(w^k) \equiv 0 \pmod{\mathfrak{r}}$ implies that $w^k$ has multiplicative order  $q$ in the finite field $\mathbb{F}_{\mathfrak{r}} = \mathcal{O}_K/\mathfrak{r}$. Since the extension has degree $p$, the multiplicative group satisfies $|\mathbb{F}_{\mathfrak{r}}^\times| \le r^p - 1$. By Lagrange's theorem, the order $q$ divides this group order, and hence
\begin{equation*}
q \le |\mathbb{F}_{\mathfrak{r}}^\times| \le r^p - 1 \le (\sqrt{N})^p - 1 = N^{p/2} - 1.
\end{equation*}
This contradicts the  bound $q > N^{p/2} - 1$. Hence, $N$ must be prime.
\end{proof}
\begin{remark}
Using the above theorem together with Proposition~\ref{thm:gen_root}, one can similarly develop a prime generation algorithm for extensions of arbitrary prime degree. Since the construction follows the same steps as Algorithm~\ref{alg:generation_part1}, we omit the details.
\end{remark}
To test our generalized framework in practice, we implemented the algorithm using the degree-5 field $K = \mathbb{Q}(\sqrt[5]{2})$. We set up the generator to find candidates that satisfy the relation $\Phi_5(N) = 5q$. Since the cyclotomic polynomial grows at a rate of roughly $\Phi_5(N) \approx N^4$, the resulting seed primes $q$ become massive roughly $q \approx N^4$.

\renewcommand{\arraystretch}{1.3}
\begin{longtable}{|c|p{6cm}|p{5cm}|c|}
\caption{Execution times and explicitly generated parameters for the Generalized Degree $p=5$ Cryptographic Prime Generation Chain.}
\label{tab:degree5_results} \\
\hline
\textbf{Bits} & \textbf{Trusted Seed Prime ($q$)} & \textbf{Constructed Prime ($N$)} & \textbf{Time (ms)} \\
\hline
32 & \texttt{2318879223369766908647165365\newline 703390383681} & \texttt{10376766241} & 8.17 \\
\hline
64 & \texttt{4472100928452149897742558073\newline 4272798601692194043299058118\newline 9456727315492379380481} & \texttt{38669664147672032641} & 16.79 \\
\hline
96 & \texttt{1625537387286824001820950409\newline 8570657067665848604448774092\newline 8491154260714395696121776189\newline 9317849462277533542358343002\newline 61361} & \texttt{16884637525328513551\newline 0273220281} & 66.12 \\
\hline
128 & \texttt{1677868795061963263287256716\newline 4443039212421324628105220322\newline 2823040168287511940786605648\newline 8544704985811761532295344444\newline 6856029202812457536585170449\newline 337489971012221} & \texttt{53818550955417216334\newline 4449011382424989511} & 37.17 \\\hline
\end{longtable}

\section{Concluding Remarks}\label{sec:10}
In this paper, we present a practical bottom-up approach to generalized algebraic primality proving. Rather than relying on complicated recursive searches, our method constructs primes directly, ensuring that the structural bound $q > N^{p/2}-1$ is satisfied by design. Additionally, by working strictly within monogenic pure prime degree number fields, the underlying arithmetic remains fast and deterministic, completely avoiding heavy computational overhead.\\
However, there is a natural limitation to keep in mind regarding the choice of degree $p$. Because our construction is based on the cyclotomic relation $\Phi_p(N) = k \cdot q$, the seed prime $q$ scales roughly on the order of $N^{p-1}$. If $p$ is chosen to be large, the required seed $q$ becomes larger than the target prime $N$. Since modern cryptography requires $N$ itself to be massive, managing a seed prime that is exponentially larger is computationally impractical. For this reason, our framework is most powerful and effective for small prime degrees, such as $p=3$ and $p=5$. In these optimal cases, the scaling is well-balanced, allowing us to quickly generate large and structurally secure cryptographic primes in a strict $\tilde{\mathcal{O}}(\log^2 N)$ time.\\
Finally, we wish to point out why we restricted the degree of extension to a prime $p$, rather than allowing a general integer $n$. This restriction is vital because composite degrees introduce structural complications that affect both the clarity and efficiency of the test. When $p$ is prime, factorization $x^p - 1 = (x - 1)\Phi_p(x)$ provides a simple and binary way to control the multiplicative order. If $n$ is composite, the factorization is split into several cyclotomic polynomials. For example, when $n=4$, we have $x^4 - 1 = (x - 1)(x + 1)(x^2 + 1)$. In such cases, the test would have to manage multiple intermediate divisors, ruining the efficiency of the verification step. Furthermore, composite degrees naturally introduce intermediate subfields. Elements can fall into these smaller subfields, causing the generalized unitary group $\mathcal{G}_N(d)$ to lose its strict cyclic structure. Restricting our framework to prime degrees ensures that no intermediate subfields exist, keeping the group structure perfectly rigid and the primality proof entirely deterministic.
\bibliographystyle{amsplain}
\bibliography{references}  
\end{document}